 \theoremstyle{definition}
 \newtheorem{ddd}{Definition}[section]
 \theoremstyle{plain}
 \newtheorem{ttt}[ddd]{Theorem}
 \newtheorem{llll}[ddd]{Lemma}
 \newtheorem{ccc}[ddd]{Corollary}
 \newcommand{\mdeg}{\mathrm{mdeg}}
\begin{document}
  \title[On Universal Equivalence]{On Universal Equivalence of Partially Commutative Metabelian Lie Algebras.\footnote{The author is
        supported by the Russian Foundation for Basic Research (Grant
       12-01-00084).}}
  \author{E.\,N.\,Poroshenko}
  \date{}
  \begin{abstract}
    In this paper, we consider partially commutative metabelian Lie algebras whose defining graphs are
    cycles. We show that such algebras are universally equivalent iff the corresponding cycles have the same length.
    Moreover, we give an example showing that the class of
    partially commutative metabelian Lie algebras such that their
    defining graphs are trees is not separable by universal theory in the class of
    all partially commutative metabelian Lie algebras.
  \end{abstract}
  \maketitle
  \section{Introduction}
    Let
    $G=\langle X,E\rangle$ be an undirected graph without loops with the finite  set of vertices
    $X=\{x_1,\dots x_n\}$ and the set of edges
    $E$
    ($E\subseteq X\times X$). We denote the elements of
    $E$ by
    $\{x,y\}$.

    Suppose that
    $R$ is an associative
    commutative ring
    $R$ with a unit.
    A \emph{partially commutative Lie algebra} over
    $R$ is an
    $R$-algebra
    $\mathcal{L}_R(X;G)$ with the set of generators
    $X$ and the set of defining relations of the form
    $$[x_i,x_j]=0, \text{ where } \{x_i,x_j\}\in E.$$
    Henceforth, Lie product of
    $g$ and
    $h$ is denoted by
    $[g,h]$.
    $G$ is called the \emph{defining graph} of the corresponding algebra.

    One can also define partially commutative Lie algebras in some variety of Lie algebras.
    In this case, a partially commutative Lie algebra in a variety is a Lie
    $R$-algebra defined by a set of generators, defining
    relations, and the set of identities holding in this variety. In this paper, we consider partially
    commutative Lie algebras in the variety of metabelian Lie algebras.

    So, the definition of partially commutative Lie algebras is analogous to ones of other
    partially commutative structures such as groups, monoids etc. (see~%
 \cite{DK93}).

    Partially commutative groups  are studied very heavily nowadays (see~%
\cite{Se89,DK92,She05,She06,DKR07,GT09,Ti10,Ti11,GT11}).
    In some papers (for example, in
\cite{GT09,Ti10,GT11}), universal theories of partially
    commutative metabelian groups were studied.

    Partially commutative algebras (both associative and Lie algebras) were studied less. Although, there are
    some results obtained for other partially commutative structures (see
\cite{KMNR80,CF69,DK92',Por11,Por12,PT13}). In
 \cite{PT13}, for instance, partially commutative Lie algebras whose defining graphs are trees
    were considered and the universal theories of these algebras
    were studied.

    In this paper, we continue studying universal theories of
    partially commutative Lie algebras.

    In Section~%
\ref{sec2}, preliminary definitions and results are given.

   In Section~%
\ref{sec3}, the partially commutative Lie algebras
   whose graphs are cycles are considered. It is shown that two such algebras are universally equivalent iff
   the cycles have the same length.

   In Section~%
\ref{sec4}, some graph transformation is defined. It is shown
   that if a partially commutative metabelian Lie algebra is obtained from another one by applying this transformation
   then both algebras have the same  universal theory. The paper finishes by giving an example of
   two universally equivalent algebras such that the defining graph of the first one is a tree while the defining graph
   of the second one is not.

 \section{Preliminaries}\label{sec2}
   Let
   $R$ be an integral domain and let
   $G=\langle X;E \rangle$ be an undirected graph with the set of vertices
   $X$ and the set of edges
   $E$. By
   $M(X;G)$  denote the partially commutative
   metabelian Lie algebra with the set of generators
   $X$ and the defining graph
   $G$, i.e. the Lie
   $R$-algebra
   $M_R(X)/I$, where
   $M_R(X)$ is the free metabelian Lie
   $R$-algebra with the set of generators
   $X$ and
   $I$ is the ideal of
   $M_R(X)$ generated by the set of relations
   $\{[x_i,x_j]=0\,|\,x_i,x_j\in X, \text{ such that } \{x_i,x_j\}\in E \}$.

   If
   $\{x,y\}\in E$ then we write
   $x\leftrightarrow y$. Similarly, suppose that
   $X' \subseteq X$ and
   $x\in X$ is adjacent to all vertices in
   $X'$. Then we write
   $x \leftrightarrow X'$. These pieces of notation are going to be used as global ones. Namely, they mean that
   a vertex is adjacent to another vertex (or to all vertices in a subset of
   $X$) in
   $G$, i.e in the defining graph of the {\bfseries initial} partially commutative metabelian Lie
   algebra
   $M(X;G)$.

   In particular, since
   $G$ has no loops
   $x\leftrightarrow X'$ implies
   $x\not \in X'$.

   In this paper, we assume
   $X=\{x_0,x_1, \dots, x_{n-1}\}$. We also suppose that
   $R$ is an integral domain containing the ring of integers
   $\mathbb{Z}$ as a subring. We denote Lie monomials as bracketed lowercase Latin letters (for
   example,
   $[u]$) to keep the same notation as in some other papers.
   \begin{ddd} Let
     $[u]$ be a Lie monomial in the set of generators
     $X$. The \emph{multidegree} of
     $[u]$ is the vector
     $\overline{\delta}=(\delta_{0},\delta_2,\dots,\delta_{n-1})$, where
     $\delta_i$ is the number of occurrences of
     $x_i$ in
     $[u]$.
   \end{ddd}

   Let us denote by
   $\mdeg ([u])$ the multidegree of
   $[u]$ and by
   $\mdeg_i (u)$ the number of
   $x_i$ in
   $[u]$, i.e. the
   $i$th coordinate of
   $\mdeg([u])$.

   The \emph{glued multidegree} of
   $[u]$ is the vector
   $\widetilde{\mdeg}([u])=(\delta_{0},\delta_2,\dots,\delta_{n-2}+\delta_{n-1})$,
   where
   $\delta_i=\mdeg_i([u])$ as above.

   Let us introduce some pieces of notation associated with
   graphs. Let
   $H$ be an arbitrary undirected graph. By
   $V(H)$ and
   $E(H)$ denote the set of the vertices and the set of the edges of this graph respectively. Next, let
   $V'\subseteq V(H)$. By
   $H(V')$ denote the subgraph of
   $H$, generated by the set
   $V'$.

   Let
   $[u]$ be a Lie monomial not equal to zero in
   $M(X;G)$.  By
   $X_{[u]}$ denote the set
   $\{x_i\in X\,|\, \mdeg_{i}([u])\neq 0\}$. Correspondingly,
   $G_{[u]}$ is a subgraph of
   $G$ generated by
   $X_{[u]}$. Likewise, if
   $v$ is an associative monomial then denote by
   $X_{v}$ the set of generators occurring in
   $v$. We also denote by
   $G_{v}$ the graph
   $G(X_v)$.

   In
\cite{PT13}, the explicit description of a basis of a Lie
   algebra
   $M(X;G)$ was found. Suppose that the set
   $X$ is linearly ordered. Then the basis of
   $M(X;G)$ is the set of monomials of the form
   $[u]= [\dots [x_{i_1},x_{i_2}],\dots, x_{i_k}]$ having the following properties:
   \begin{enumerate}
     \item
       $x_{i_2}<x_{i_1}$;
     \item
       $x_{i_2}\leqslant x_{i_3}\leqslant \dots \leqslant x_{i_k}$;
     \item
       $x_{i_1}$ and
       $x_{i_2}$ are in different connected components of
       $G_{[u]}$;
     \item
       let
       $H$ be the connected component of
       $G_{[u]}$ containing
       $x_{i_1}$, then
       $x_{i_1}$ is the greatest element among the vertices in
       $V(H)$.
   \end{enumerate}

   Such monomials are called \emph{basis monomials}. Note, that the set of
   basis monomials depends on an order of the set
   $X$. However, whatever ordering is used the set of all basis
   monomials of
   $M(X;G)$ forms a basis of this algebra.

   \begin{ddd}
     If all monomials of a Lie polynomial
     $g$ have the same multidegree
     $\overline{\delta}$ then we call such polynomial
     \emph{homogeneous} and write
     $\mdeg (g)=\overline{\delta}$, where
     $\overline{\delta}$ is the multidegree of a monomial in
     $g$.
   \end{ddd}

   Since the set of identities and the set of defining relations
   of a partially commutative metabelian Lie algebra are
   homogeneous the following statement holds.
   If
   $0=\sum_{i}g_i$ in
   $M(X;G)$, where all
   $g_i$ are homogeneous Lie polynomials of pairwise distinct multidegrees, then
   for any
   $i$ we have
   $g_i=0$ in this algebra. In particular, if
   $g$ is homogeneous then for any order of
   $X$ all basis monomials in
   $g$ have the same multidegree.

   Let
   $g$ be an element such that its decomposition
   to the linear combination of basis monomials is a homogeneous
   polynomial. Then for any other order of
   $X$ the decomposition of
   $g$ to the linear combination of basis monomials is also
   homogeneous. Moreover, the multidegrees of the corresponding
   polynomials are same for all orders. So we can define  \emph{homogeneous elements}
   in an obvious way. Indeed, since all identities and defining relations of a
   partially commutative metabelian Lie
   $R$-algebra are homogeneous, any transformation of a non-zero Lie
   monomial gives a homogeneous linear combination of the same
   multidegree. So, we can also define the \emph{multidegree} of a homogeneous
   element of
   $M(X;G)$ as follows. Let
   $g$ be a homogeneous element. Consider a decomposition of this element to a linear combination of basis monomials
   (with respect to any
   order on
   $X$). Then by definition put
   $\mdeg(g)=\mdeg([u])$, where
   $[u]$ is an arbitrary basis Lie monomial contained in this decomposition.

   Let
   $[u]=[\dots[x_{i_1},x_{i_2}],\dots, x_{i_k}]$. It is easy to show (see, for example,
\cite{PT13}) that for any permutation of
   $x_{i_3},\dots, x_{i_k}$ we obtain a monomial equal to
   $[u]$ in
   $M'(X)$, consequently these monomials are also equal in
   $M'(X;G)$. That means
   \begin{equation}\label{permut}
      [\dots[[x_{i_1},x_{i_2}],x_{i_3}],\dots, x_{i_k}]=
        [\dots[[x_{i_1},x_{i_2}],x_{\sigma(i_3)}],\dots, x_{\sigma(i_k)}],
   \end{equation}
   where
   $\sigma$ is a permutation of
   $\{i_3,i_4,\dots , i_k\}$.

   Let
   $R[X]$ be the set of all commutative associative polynomials over
   $R$. It follows from the last paragraph that the derived subalgebra
   $M'(X;G)$ of
   $M(X;G)$ is an
   $R[X]$-module with respect to the adjoint representation. Denote by
   $u.f$ the element of
   $M'(X;G)$ obtained by acting the element
   $f\in R[X]$ on
   $[u]\in M'(X;G)$. Namely, let us define
   $u.f$ inductively:
   \begin{enumerate}
     \item
       $u.y=[u,y]$ for any
       $y\in X$;
     \item
       Let
       $f=y_1y_2\dots y_m$ for
       $m\geqslant 2$ and let
       $f_0=y_1y_2\dots y_{m-1}$ then
       $u.f=(u.f_0).y_m$;
     \item
       Finally, if
       $f=g+s$, where
       $s$ is a commutative associative monomial then
       $u.f=u.g+u.s$.
   \end{enumerate}

   \begin{ddd}
     Let
     $g$ be an arbitrary element of the algebra
     $M(X;G)$  and let
     $C(g)$ be the centralizer of
     $g$. The set
     $$\mathcal{C}(g)=C(g)\cap M'(X;G)$$
     is called the \emph{derived centralizer} of
     $g$.
   \end{ddd}

   For the derived centralizers of the generators of
   $M(X;G)$ the following theorem holds (see~%
\cite{PT13}).
   \begin{ttt}\label{centinter}
     Let
     $M(X;G)$ be a metabelian partially commutative Lie
     $R$-algebra, where
     $X=\{x_0,x_1,\dots,x_{n-1}\}$. Then for any elements
     $x_{i_1},x_{i_2},\dots, x_{i_m}$ and for any
     $\alpha_{i_1},\alpha_{i_2},\dots,\alpha_{i_m}\in R\backslash\{0\}$
     we have
     $$\mathcal{C}\bigl(\sum_{j=1}^m \alpha_{i_j} x_{i_j}\bigr)=\bigcap_{j=1}^{m} \mathcal{C}(x_{i_j})$$
   \end{ttt}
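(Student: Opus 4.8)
The inclusion ``$\supseteq$'' is immediate from $R$-bilinearity of the bracket: if $[g,x_{i_j}]=0$ for every $j$, then $\bigl[g,\sum_{j=1}^{m}\alpha_{i_j}x_{i_j}\bigr]=\sum_{j=1}^{m}\alpha_{i_j}[g,x_{i_j}]=0$, so $g$ lies in the left-hand side. Everything therefore lies in proving the reverse inclusion: given $g\in M'(X;G)$ with $[g,\sum_{j}\alpha_{i_j}x_{i_j}]=0$, one must show $[g,x_{i_j}]=0$ for each $j$.

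I would recast this in module language. Put $\ell=\sum_{j=1}^{m}\alpha_{i_j}x_{i_j}$, regarded as an element of $R[X]$, and recall $[g,y]=g.y$; then $\mathcal{C}(h)=\{\,g\in M'(X;G)\mid g.h=0\,\}$ for every $h\in R[X]$, so the assertion is the equality of annihilators $\operatorname{Ann}_{M'(X;G)}(\ell)=\bigcap_{j=1}^{m}\operatorname{Ann}_{M'(X;G)}(x_{i_j})$ in the $R[X]$-module $M'(X;G)$. First one reduces to homogeneous elements: since all defining relations and identities are homogeneous, $g=\sum_{\overline\delta}g_{\overline\delta}$ with the $g_{\overline\delta}$ homogeneous, and the relation $g.\ell=0$ breaks, after collecting terms of a fixed multidegree $\overline\mu$, into the coupled system
\[
  \sum_{j=1}^{m}\alpha_{i_j}\bigl(g_{\overline\mu-\overline e_{i_j}}.x_{i_j}\bigr)=0\qquad\text{for every }\overline\mu,
\]
where $\overline e_{i_j}$ denotes the multidegree with a single $1$ in coordinate $i_j$ and $g_{\overline\nu}:=0$ when $\overline\nu$ has a negative coordinate or is not the multidegree of any nonzero element. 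The goal becomes $g_{\overline\delta}.x_{i_j}=0$ for all $\overline\delta$ and all $j$. A naive multidegree-separation argument cannot work here, because for fixed $\overline\mu$ all the summands $g_{\overline\mu-\overline e_{i_j}}.x_{i_j}$ lie in the same homogeneous component, of multidegree $\overline\mu$; genuine information about the module structure of $M'(X;G)$ is needed. My plan is to induct over a well-ordering of multidegrees, beginning at those $\overline\mu$ at which the system decouples (only one summand being defined or nonzero, which happens on the ``boundary'' of the set of multidegrees occurring in $g$) and propagating inward, expanding each $g_{\overline\delta}.x_{i_j}$ in the basis of $M'(X;G)$ quoted above — reducing products to basis form by~\eqref{permut} and the metabelian Jacobi relation — and controlling which basis monomials can occur in $g_{\overline\delta}.x_{i_j}$ for each separate $j$.

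That control, and hence the module structure of $M'(X;G)$, is the technical heart and the step I expect to be the main obstacle. From the presentation $M'(X;G)=\sum_L R[X].L$, with $L$ running over the ``leaves'' $[x_q,x_p]$ attached to non-edges $\{x_p,x_q\}$, one wants to show that the cyclic submodule $R[X].[x_q,x_p]$ is isomorphic to $R[X]$ modulo the monomial prime ideal $\mathfrak{p}_{p,q}$ generated by all $x_s$ with $x_s\leftrightarrow x_p$ and $x_s\leftrightarrow x_q$ — a polynomial ring over the domain $R$ — and, more, that $M'(X;G)$ embeds into a finite product of such quotients. Granting this, the annihilator equality is immediate: in a product $\prod_k R[X]/\mathfrak{p}_k$ of polynomial rings over $R$, an element $(v_k)_k$ is killed by $\ell$ iff for each $k$ either $v_k=0$ or $\ell\in\mathfrak{p}_k$, and since the $x_{i_j}$ are distinct generators with $\alpha_{i_j}\neq 0$ and $m\geqslant 2$, the linear form $\ell$ lies in a monomial prime exactly when every $x_{i_j}$ does, so this is the same as saying $(v_k)_k$ is killed by every $x_{i_j}$. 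The delicate point is the module-structure claim itself, equivalently the statement that a basis monomial $[u]$ is annihilated by $x_s$ precisely when $x_s$ is adjacent to every vertex of $X_{[u]}$: appending $x_s$ to $[u]$ may at once violate the ordering conditions defining a basis monomial and merge two connected components of $G_{[u]}$, each triggering a Jacobi reduction, and one must verify that such reductions never cause a cancellation between the contributions of distinct indices $j$ beyond the trivial one.
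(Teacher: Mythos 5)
First, a point of reference: the paper does not prove Theorem~\ref{centinter} at all --- it is imported verbatim from \cite{PT13} --- so there is no in-paper argument to compare yours against; your proposal has to stand on its own. As it stands it is a programme rather than a proof: the inclusion $\supseteq$ and the reduction to the coupled homogeneous system are fine, but all the substance is deferred to your ``module-structure claim'', and both concrete forms in which you state that claim are false. The annihilator of $[x_q,x_p]$ in $R[X]$ is \emph{not} the prime generated by the common neighbours of $x_p$ and $x_q$: if $x_s\leftrightarrow x_p$, $x_s\leftrightarrow x_t$, $x_t\leftrightarrow x_q$ and there are no other adjacencies among $x_p,x_q,x_s,x_t$, then $[x_q,x_p].x_sx_t=0$ (the support of $x_sx_t$ joins $x_p$ to $x_q$) while $[x_q,x_p].x_s\neq 0$ and $[x_q,x_p].x_t\neq 0$. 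The annihilator is the squarefree monomial ideal generated by the monomials whose supports connect $x_p$ to $x_q$ in $G$, and it is not prime in general. Likewise ``$[u].x_s=0$ precisely when $x_s$ is adjacent to every vertex of $X_{[u]}$'' is wrong: the correct criterion, which is the one used throughout the proof of Lemma~\ref{centralizators}, is that adjoining $x_s$ merges the two connected components of $G_{[u]}$ containing the two leading letters of $[u]$, and for that it suffices that $x_s$ be adjacent to one vertex of each of those components.

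This matters because the primality of the annihilators is exactly what your closing step uses ($v\ell=0$ in a domain forces $v=0$ or $\ell=0$), so the argument as written collapses. It is partially repairable: a squarefree monomial ideal is an intersection of monomial primes, so each cyclic module $R[X].[x_q,x_p]$ still embeds in a finite product of polynomial rings over $R$, and there the desired annihilator identity for a linear form with all coefficients nonzero does hold. But the genuinely hard step is still untouched: $M'(X;G)=\sum_L R[X].L$ is not a direct sum, since the metabelian Jacobi identity $[x,y].z+[y,z].x+[z,x].y=0$ couples the cyclic summands, so you must still show that $g.\ell=0$ forces $g.x_{i_j}=0$ for an arbitrary element $g$ with many representations over the various leaves; this is where the basis description from \cite{PT13} and a genuine cancellation analysis are unavoidable. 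You flag this yourself as ``the main obstacle,'' and until it is carried out the proposal does not establish the theorem.
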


   Finally, let us remind some terminology related universal theories of algebraic systems.
   Let
   $\Phi$ be a formula having no free variables and including elements of an algebra
   $A$. Then by definition put
   $A \models \Phi$ if
   $\Phi$ holds on
   $A$.
   \begin{ddd}
     An
     $\exists$-\emph{sentence} is a formula of the form
     $$\exists w_1\dots w_m \Phi(w_1,\dots,w_m),$$
     having no free variables. Here
     $\Phi(w_1,\dots,w_m)$ is a formula of predicate calculus in
     the corresponding algebraic system such that this formula does not contain quantifiers.
   \end{ddd}

   \begin{ddd}
     The set of all
     $\exists$-sentences that are true in a Lie algebra
     $L$ is called the \emph{existential theory} or the
     $\exists$-\emph{theory} of this Lie algebra.
   \end{ddd}
   \begin{ddd}
     Lie algebras are called \emph{existentially equivalent}
     if their existential theories coincide.
   \end{ddd}
   The notion of
   $\forall$-\emph{sentence} is defined analogously as well as
   the notions of \emph{universal theory}(or
   $\forall$-\emph{theory}) of a Lie algebra and \emph{universally equivalent} Lie algebras.

   It is easy to see that Lie algebras
   $L_1$ and
   $L_2$ are existentially equivalent iff these Lie algebras are
   universally equivalent.

   The procedure of exchanging functional symbols by predicate ones is well known in model theory.
   Any set with all predicates induced on it is a submodel.

  Let us formulate a well-known result in model theory.
  \begin{ttt}\label{univeq}
    Arbitrary algebraic systems (ex., Lie algebras)
    $L_1$ and
    $L_2$ are universally equivalent iff each finite model of the first algebraic system
    is isomorphic to a finite model of the second one.
  \end{ttt}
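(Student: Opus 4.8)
The plan is to reduce everything to the relational language and then run the standard (finite) diagram argument. First I would pass from the functional signature of a Lie $R$-algebra (addition, the bracket, the scalar multiplications, the constant $0$) to the associated relational signature, replacing each operation by its graph relation $R_f$. The bookkeeping fact to record is that this translation sends $\forall$-sentences to $\forall$-sentences and $\exists$-sentences to $\exists$-sentences with the same truth value in the corresponding structures: an occurrence of a composite term $f(t_1,\dots,t_m)$ is removed by introducing a fresh variable $y$, replacing the term by $y$, and conjoining the atom $R_f(t_1,\dots,t_m,y)$ in the existential case (or implying the matrix from it in the universal case); iterating flattens all terms, equalities between composite terms included, while the fresh variables inherit the quantifier block. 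Hence it suffices to prove the statement for relational structures $L_1^{*},L_2^{*}$, where "finite model" is literally a finite substructure with the induced relations and "isomorphic to a finite model of the other" means "embeddable into the other". I would also record at the outset the already-noted equivalence of $\forall$-equivalence and $\exists$-equivalence.

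For the direction "same finite models $\Rightarrow$ universally equivalent": suppose a universal sentence $\varphi^{*}=\forall\bar x\,\psi^{*}(\bar x)$ with $\psi^{*}$ quantifier-free holds in $L_2^{*}$ but fails in $L_1^{*}$; then some tuple $\bar a$ from $L_1^{*}$ satisfies $L_1^{*}\models\neg\psi^{*}(\bar a)$. Let $A^{*}$ be the substructure of $L_1^{*}$ whose universe is the (finite) set of entries of $\bar a$. Since $\neg\psi^{*}$ is quantifier-free, its truth at $\bar a$ depends only on the induced relations, so $A^{*}\models\neg\psi^{*}(\bar a)$; by hypothesis $A^{*}$ embeds into $L_2^{*}$ by some $e$, and embeddings both preserve and reflect quantifier-free formulas, whence $L_2^{*}\models\neg\psi^{*}(e(\bar a))$, contradicting $L_2^{*}\models\varphi^{*}$. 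By symmetry $L_1$ and $L_2$ have the same universal theory.

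For the converse, assume $L_1\equiv_{\forall}L_2$, hence $L_1\equiv_{\exists}L_2$, and let $A^{*}$ be a finite submodel of $L_1^{*}$ with universe $\{a_1,\dots,a_k\}$. Form its diagram $\Delta(x_1,\dots,x_k)$, the conjunction of all atomic and all negated atomic formulas in $x_1,\dots,x_k$ that $a_1,\dots,a_k$ satisfy in $A^{*}$; with a finite relational signature and finite universe this is a \emph{finite} conjunction, so $\exists x_1\dots x_k\,\Delta$ is an $\exists$-sentence, true in $L_1^{*}$ by construction and therefore in $L_2^{*}$. A witnessing tuple $b_1,\dots,b_k$ gives a map $a_i\mapsto b_i$ which, by the choice of $\Delta$, is injective and preserves and reflects all relations, i.e. an isomorphism of $A^{*}$ onto a finite submodel of $L_2^{*}$; symmetry gives the other inclusion, and Theorem~\ref{univeq} follows.

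The step I expect to be the main obstacle is the one usually swept under the rug: ensuring the diagram is a single $\exists$-sentence. This is immediate for a finite relational signature, but an $R$-algebra carries scalar multiplications by every element of $R$, so if $R$ is infinite the signature is infinite and the diagram becomes an infinite type; compactness then only realizes it in an elementary extension of $L_2^{*}$, not in $L_2^{*}$ itself, and the literal statement can fail. I would circumvent this by observing that each $\forall$- or $\exists$-sentence mentions only finitely many function symbols, so one may fix, once and for all, the finite subsignature relevant to a given sentence and carry out both the functional-to-relational translation and the diagram argument inside it; comparing the full universal theories amounts to doing this for all finite subsignatures, which is exactly what is needed in the applications.
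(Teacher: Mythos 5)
The paper does not actually prove this theorem: it is quoted as a well-known model-theoretic fact, the only indication of the intended argument being the preceding remark that functional symbols are exchanged for predicate ones and that any subset with the induced predicates is then a submodel. Your proof is precisely that standard argument (relational translation preserving $\forall$- and $\exists$-sentences, preservation and reflection of quantifier-free formulas under embeddings for the easy direction, the finite diagram as a single $\exists$-sentence for the converse), and it is correct, under the tacit and surely intended symmetric reading of the hypothesis that finite models of each algebra embed into the other; the one-sided condition as literally printed would only yield one inclusion of universal theories. Your closing caveat is well taken and is, if anything, more careful than the source: since $R\supseteq\mathbb{Z}$ is infinite, the signature of Lie $R$-algebras carries infinitely many scalar multiplications, the diagram of a finite submodel is then an infinite set of literals rather than one $\exists$-sentence, and the "hard" direction as literally stated can indeed fail over an infinite relational signature; passing to finite subsignatures, as you propose, is the standard repair and suffices for comparing universal theories. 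It is worth noting that the paper only ever invokes the easy direction (finite submodels embed both ways, hence universal equivalence, in the proof of Theorem~\ref{equivequiv}), and that direction is insensitive to the size of the signature, so the subtlety you identify does not affect the paper's applications.
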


 \section{Algebras whose defining graphs are cycles}\label{sec3}
   In this section, we consider only the algebras
   $M(X;C_n)$, where
   $C_n$~--- is a cycle on
   $n$ vertices
   ($n\geqslant 3$).

   By
   $\mathbb{Z}_n$ denote the residue ring with respect to base
   $n$, namely put
   $\mathbb{Z}_n=\{0,1,2,\dots, n-1\}$ with addition and subtraction defined in the natural way (for residue rings).
   For arbitrary
   $r,s\in \mathbb{Z}_n$ let us denote by
   $|r-s|$ the lesser element between
   $r-s$ and
   $s-r$ (in the sense of the standard order:
   $0<1<\dots< n-1$).

   Let
   $G$ be a graph. A connected component of
   $G$ is called a \emph{chain} if it is an isolated vertex or a tree with two endpoints. In other words,
   a chain is a tree in that all vertices are connected consecutively.

   \begin{llll} \label{centralizators}
     Let
     $M(X;C_n)$ be the metabelian partially commutative Lie
     $R$-algebra, whose defining graph is the cycle of the length
     $n$, where
     $n\geqslant 3$. Then the following properties hold.\\
     a) If
     $\alpha,\beta \in R\backslash\{0\}$ then
     $\mathcal{C}(\alpha x_i+\beta x_{i+1})=0$.\\
     b) Let
     $|i-j|>1$ and
     $\alpha,\beta \in R\backslash\{0\}$. Then the derived centralizer
     $\mathcal{C}(\alpha x_i+\beta x_j)$ consists of all linear combinations of non-zero Lie monomials
     $[u_r]$ such that
     $X_{[u_r]}=X\backslash\{x_i,x_j\}$. Moreover, any element of
     $\mathcal{C}(\alpha x_i+\beta x_j)$ can be represented in the form
     $f=[x_{i-1},x_{i+1}].g$, where
     $g$ is an associative polynomial.\\
     c) If
     $m\geqslant 3$ and
     $\alpha_1,\dots, \alpha_m \in R\backslash \{0\}$ then
     $\mathcal{C}(\sum_{j=1}^m \alpha_j x_{i_j})=0$.
   \end{llll}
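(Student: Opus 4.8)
The starting point is Theorem~\ref{centinter}: it reduces each of a), b), c) to a statement about the intersection $\bigcap_{j}\mathcal{C}(x_{i_j})$ of the derived centralizers of the individual generators $x_{i_j}$. For $n=3$ the cycle $C_3$ is complete, so $M(X;C_3)$ is abelian, $M'(X;C_3)=0$, all derived centralizers vanish, and b) is vacuous; so assume $n\geqslant 4$ throughout.

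The main work is to prove that for every $i$ one has $\mathcal{C}(x_i)=[x_{i-1},x_{i+1}].R[X]$, the cyclic $R[X]$-submodule of $M'(X;C_n)$ generated by $[x_{i-1},x_{i+1}]$. The inclusion ``$\supseteq$'' is immediate: since $x_{i-1}\leftrightarrow x_i\leftrightarrow x_{i+1}$, the vertices $x_{i-1}$ and $x_{i+1}$ lie in one connected component of the subgraph on $\{x_{i-1},x_i,x_{i+1}\}$, so $[x_{i-1},x_{i+1}].x_i=0$ and therefore $([x_{i-1},x_{i+1}].g).x_i=([x_{i-1},x_{i+1}].x_i).g=0$. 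For ``$\subseteq$'' I would use the basis of $M(X;C_n)$ from Section~\ref{sec2} together with two of its consequences: a non-zero Lie monomial with support $S$ exists exactly when $C_n(S)$ is disconnected (forced by condition~(3) of the basis description, since rewriting preserves multidegree and hence support), and, for a monomial $h$, $[x_a,x_b].h=0$ iff $x_a$ and $x_b$ lie in the same connected component of $C_n(\{x_a,x_b\}\cup X_h)$; recall also that $C_n(S)$ is always a disjoint union of arcs. Given a basis monomial $[u]$ with inner commutator $[x_a,x_b]$, these yield that $[u].x_i=0$ forces $x_i\notin X_{[u]}$, that $x_{i-1},x_{i+1}\in X_{[u]}$, and that the $C_n(X_{[u]})$-component of $x_a$ contains one of $x_{i-1},x_{i+1}$ and that of $x_b$ contains the other. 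Using the identities $[x_a,x_b].x_c=[x_c,x_b].x_a$ (valid when $\{x_a,x_c\}\in E$) and $[x_a,x_b].x_c=[x_a,x_c].x_b$ (valid when $\{x_b,x_c\}\in E$)---both consequences of the Jacobi identity, the defining relations, and \eqref{permut}---one then ``walks'' the feet $x_a$ and $x_b$ along the two arcs to $x_{i-1}$ and $x_{i+1}$, obtaining $[u]=\pm[x_{i-1},x_{i+1}].g$. The one point left, that an element of $\mathcal{C}(x_i)$ cannot involve a basis monomial on which multiplication by $x_i$ does not vanish, reduces to the injectivity of multiplication by $x_i$ on the span of those basis monomials; I expect this injectivity, and the combinatorial bookkeeping inside the walking argument, to be the principal obstacle.

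Granting the formula for $\mathcal{C}(x_i)$, note that a non-zero element $[x_{i-1},x_{i+1}].g$ requires $g\in R[X\setminus\{x_i\}]$ (otherwise $x_{i-1}\leftrightarrow x_i\leftrightarrow x_{i+1}$ rejoins the feet and the product vanishes); hence in any element of $\mathcal{C}(x_i)$ every basis monomial has support avoiding $x_i$ and containing $\{x_{i-1},x_{i+1}\}$. Part~a) follows at once: by Theorem~\ref{centinter}, $\mathcal{C}(\alpha x_i+\beta x_{i+1})=\mathcal{C}(x_i)\cap\mathcal{C}(x_{i+1})$, and any non-zero element of this intersection would have all of its basis monomials simultaneously containing $x_{i+1}$ (by membership in $\mathcal{C}(x_i)$) and avoiding $x_{i+1}$ (by membership in $\mathcal{C}(x_{i+1})=[x_i,x_{i+2}].R[X]$), which is impossible.

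For b), let $|i-j|>1$, so $n\geqslant 4$; by Theorem~\ref{centinter}, $\mathcal{C}(\alpha x_i+\beta x_j)=\mathcal{C}(x_i)\cap\mathcal{C}(x_j)$. An element of this intersection has the form $[x_{i-1},x_{i+1}].g$ with $g\in R[X\setminus\{x_i\}]$, and the condition that it lie in $\mathcal{C}(x_j)$ gives, after comparing multidegrees, $[x_{i-1},x_{i+1}].(m x_j)=0$ for every monomial $m$ of $g$. Since $x_i$ is absent, $x_{i-1}$ and $x_{i+1}$ can be rejoined only through the whole arc $C_n\setminus\{x_i\}$, so $m x_j$ must involve every generator of $X\setminus\{x_i\}$; combined with $[x_{i-1},x_{i+1}].m\neq0$ this forces the support of $[x_{i-1},x_{i+1}].m$ to be exactly $X\setminus\{x_i,x_j\}$. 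Conversely, $C_n(X\setminus\{x_i,x_j\})$ is the disjoint union of the two arcs obtained by deleting $x_i$ and $x_j$, with $x_{i-1}$ and $x_{i+1}$ in different arcs, so the walking argument writes every non-zero monomial with support $X\setminus\{x_i,x_j\}$ as $\pm[x_{i-1},x_{i+1}].g$ and shows it belongs to $\mathcal{C}(x_i)\cap\mathcal{C}(x_j)$; this is precisely the description in~b). For c): if some two of $x_{i_1},\dots,x_{i_m}$ are adjacent, a) already gives $\mathcal{C}(\sum_j\alpha_j x_{i_j})=0$; otherwise $m\geqslant 3$ pairwise non-adjacent generators are present and $\bigcap_j\mathcal{C}(x_{i_j})\subseteq\mathcal{C}(x_{i_1})\cap\mathcal{C}(x_{i_2})\cap\mathcal{C}(x_{i_3})$, where by b) every basis monomial of an element of $\mathcal{C}(x_{i_1})\cap\mathcal{C}(x_{i_2})$ has support $X\setminus\{x_{i_1},x_{i_2}\}$---which contains $x_{i_3}$---while every basis monomial of an element of $\mathcal{C}(x_{i_3})$ avoids $x_{i_3}$, so the intersection is $0$.
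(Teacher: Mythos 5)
Your argument is correct and reaches the same conclusions, but it is organized around a different pivot. The paper never isolates the single-generator derived centralizer: it takes a non-zero monomial $[u]=[x_s,x_t].w$ lying in the relevant intersection $\bigcap_j\mathcal{C}(x_{i_j})$ (after invoking Theorem~\ref{centinter}, exactly as you do) and analyses directly how each $x_{i_j}$ must bridge the two chains of $G_{[u]}$ containing the feet $x_s,x_t$: for a), the adjacency of $x_i$ and $x_{i+1}$ makes two such bridges impossible; for b), two non-adjacent bridges force exactly two chains and $X_{[u]}=X\setminus\{x_i,x_j\}$; for c), a third bridge is excluded by the same count. You instead first establish the explicit formula $\mathcal{C}(x_i)=[x_{i-1},x_{i+1}].R[X]$ and then deduce a)--c) by support bookkeeping on intersections; this makes c) in particular cleaner than the paper's re-run of the chain analysis, and the formula is a reusable statement the paper leaves implicit. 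The combinatorial core is identical in both treatments: the criterion that $[x_a,x_b].h=0$ iff the feet are joined in $C_n(\{x_a,x_b\}\cup X_h)$, the fact that induced subgraphs of a cycle are disjoint arcs, and the switching of generators within a connected component (your two Jacobi consequences are exactly the switching lemma of \cite{PT13} that the paper invokes before producing the normal form $[x_{i+1},x_{i-1}].v$). The one genuine issue you flag --- that an element of $\mathcal{C}(x_i)$ cannot involve basis monomials that are not individually killed by $x_i$, i.e. injectivity of the action of $x_i$ on the span of the surviving basis monomials of a fixed multidegree --- is present in the paper as well and is dispatched there by the citation ``By \cite{PT13}, we only have to prove the assertion for non-zero Lie monomials''; you should quote that reduction rather than leave it as an expected obstacle, but it is not a defect relative to the paper's own proof.
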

   \begin{proof}
     It follows from Theorem~%
\ref{centinter} that for any
     $\alpha_{j}\in R\backslash\{0\}$ ($j=1,2,\dots, m$) the equation
     $\mathcal{C}(\sum_{j=1}^m \alpha_j x_{i_j})=\bigcap_{j=1}^m \mathcal{C}(x_{i_j})$ holds in any algebra
     $M(X;G)$.

     By construction of basis monomials we can see that if
     $[u] \in \mathcal{C}(x_i)\backslash \{0\}$ for a Lie monomial
     $[u]$ then
     $\mdeg_i([u])=0$.

     a) Let
     $g\in \mathcal{C}(\alpha x_i+\beta x_{i+1})$ and let
     $g\neq 0$ in
     $M(C_n;X)$. By
\cite{PT13}, we only have to prove the assertion for non-zero
     Lie monomials. In this case,
     $G_{[u]}$ has at least two chains. Therefore, there exists
     $j\in \mathbb{Z}_n$ such that
     $x_j \not \in X_{[u]}$ and there is a representation
     $[u]=[x_s,x_t].w$, where
     $w$ is an associative monomial and
     $x_s, x_t$ are generators from different chains. Besides, since
     $[u].x_i=[u].x_{i+1}=0$ these generators should be in the same chain of the graphs
     $G(X_{[u]}\backslash \{x_i\})$ and
     $G(X_{[u]}\backslash \{x_{i+1}\})$ but this is impossible.
     Indeed,
     $x_i$ and
     $x_{i+1}$ are adjacent in
     $C_n$. Therefore, we have to add both vertices to
     $G_{[u]}$ to get one chain in the obtained graph instead of two ones in
     $G_{[u]}$.

     b) Let
     $g\in \mathcal{C}(\alpha x_i+\beta x_{j})$, where
     $|i-j|>1$. As in a), let us suppose that
     $g\neq 0$ in
     $M(X;C_n)$. Again it is sufficient to prove the statement for non-zero Lie monomials.
     Let
     $[u]$ be a monomial satisfying the conditions of b). We can
     write
     $[u]=[x_t,x_s].w$ for some associative monomial
     $w$. Let
     $T_1,T_2,\dots, T_p$ be the chains of
     $G(X_{[u]})$
     ($p\geqslant 2$ because
     $[u]\neq 0$ in
     $M(X;C_n)$). We can assume without loss of generality that
     $x_s\in T_1$, and
     $x_t\in T_2$. Since
     $[u] \in \mathcal{C}(x_i)$ we have
     $[u].x_i=0$. Consequently the vertices of the chains
     $T_1$ and
     $T_2$ are in the same connected component of
     $G(X_{[u]}\backslash \{x_i\})$. So,
     $x_i$ connects the subgraph
     $T_1$ with the subgraph
     $T_2$. Namely, in the graph
     $C_n$, the vertex
     $x_i$ is adjacent to some vertex of
     $T_2$ the vertex
     $x_i$ is adjacent to some vertex of
     $T_2$ as well. Using the same arguments for
     $x_j$ we obtain that in
     $C_n$, the vertex
     $x_j$ is adjacent to a vertex in
     $T_1$ and
     $x_j$ is adjacent to a vertex in
     $T_2$ as well. This is possible only if there are only two chains in
     $G_{[u]}$ and one endpoint of each chain is adjacent to
     $x_i$  while the other endpoint of each chain is adjacent to и
     $x_j$. Consequently,
     $X_{[u]}=X \backslash \{x_i,x_j\}$.

     There are only two connected components in
     $G_{[u]}$ and the vertices
     $x_{i-1}$ and
     $x_{i+1}$ are in the different connected components. Therefore, one of these vertices is in the same
     connected component with
     $x_s$ and the other one is in the same connected component with
     $x_t$.

     In
\cite{PT13}, it was shown that if
     $[w_1]$ is a non-zero Lie monomial and
     $[w_2]$ is obtained from
     $[w_1]$ by switching two generators
     from the same connected
     component of
     $G_{[w_1]}$, then
     $[w_1]=[w_2]$ in
     $M(X;G)$.

     Thus, we can switch
     $x_{i-1}$ with the vertices in
     $\{x_{s},x_{t}\}$ lying in the same connected component as
     $x_{i-1}$ and
     $x_{i+1}$ with another vertex of this set. Applying the anticommutativity identity to the obtained
     monomial (if it is necessary) we obtain a monomial in the form
     $[x_{i+1},x_{i-1}].v$, which is equal to
     $[u]$ in
     $M(X;G)$.

     c) It is sufficient to show that
     $\mathcal{C}(\alpha x_{i}+\beta x_j+\gamma x_{k})=0$.
     If at least two among these three vertices are adjacent in
     $C_n$ then the proof follows from  a).

     Suppose that
     $|i-j|>1$,
     $|j-k|>1$ and
     $|i-k|>1$. Let
     $T_1,T_2,\dots T_p$ are chains of
     $G_{[u]}$. Obviously,
     $p\geqslant 3$ in this case. All arguments are similar to ones in b). Let
     $[u]=[x_r,x_s].w$, where
     $w$ is an associative monomial. We can assume without loss of generality that
     $x_r$ is in a chain
     $T_1$ and
     $x_s$ is in a chain
     $T_2$. Since
     $[u].x_i=0$, we get
     $x_i$ is adjacent to some vertex in
     $T_1$ and
     $x_i$ is adjacent to some vertex in
     $T_2$ as well. That means that
     $x_i$ connects
     $T_1$ and
     $T_2$ in a connected component in
     $G(X_{[u]}\backslash \{x_i\})$. Analogously, since
     $[u].x_j=0$, we obtain
     $x_j$ is is adjacent to some vertex in
     $T_1$ and
     $x_1$ is adjacent to some vertex in
     $T_2$ as well. But in this case,
     $T_1$ and
     $T_2$ together should contain all vertices  in
     $X\backslash \{x_i,x_j\}$. This  is impossible because there are at least
     tree chains. So the proof is completed.
   \end{proof}

   Let
   $m\geqslant 4$.
   Consider the formula.
   \begin{equation}\label{longformula}
      \Phi(m)= \exists z_0,z_1,  \dots, z_{m-1} \Theta(z_0,z_1,\dots,z_{m-1}),
   \end{equation}
   where
   \begin{equation*}
     \begin{split}
       \Theta(z_0,z_1,\dots,z_{m-1})= &
         \left(\bigwedge_{i\in \mathbb{Z}_m} [z_i,z_{i+1}]=0
         \wedge \hspace{-2mm}\bigwedge_{i,j\in \mathbb{Z}_m: |j-i|>1}\hspace{-4mm} [z_i,z_j]\neq 0 \right. \wedge\\
         & \wedge \hspace{-2mm}\left.\bigwedge_{i,j \in \mathbb{Z}_m: |i-j|\cdot |(i+2)-j|\neq 1}\hspace{-4mm}
         [[z_{i},z_{i+2}],z_j]\neq 0  \right).
     \end{split}
   \end{equation*}
   It is easy to see that this formula holds in
   $M(X,C_m)$. Indeed, let us put
   $g_i$ to be equal
   $x_i$ for any
   $i\in \mathbb{Z}_m$. Obviously,
   $[x_i,x_j]=0$ iff
   $|i-j|\leqslant 1$. We also can see that
   $[[x_{i},x_{i+2}],x_{j}]=0$ iff
   the vertices
   $x_{i}$ and
   $x_{i+2}$ are in the same connected component of
   $G(\{x_i,x_{i+2},x_j\})$. But this is true only in the case
   \begin{equation} \label{prop}
     |j-i|=|j-(i+2)|=1.
   \end{equation}

   Note that if
   $m\geqslant 5$ then for any
   $k,l\in \mathbb{Z}_m$ such that
   $|k-l|=2$ there is a unique
   $t$ such that
   $|k-t|=|l-t|=1$. So, if
   $k-t=1$ then we may assume that
   $l=i$, then
   $k=i+2$ and
   $t=i+1=j$. Otherwise, we suppose that
   $k=i$, then
   $l=i+2$ and again
   $t=i+1=j$.

   However, if
   $m=4$ then the assertion from the last paragraph is not true. Indeed,
   let
   $k,l \in \mathbb{Z}_4$ be such that
   $|k-l|=2$. Then for either
   $t\in \mathbb{Z}_4$ such that
   $t\neq k$ and
   $t\neq l$ we have
   $|k-t|=|l-t|=1$. As in the last paragraph, we can put
   $k=i$ if
   $l-t=1$ and
   $l=i$ in the other case.

   Let us prove some conditions the elements
   $g_0,g_1,\dots, g_{m-1}\in M(X;C_n)$ should satisfy to
   $M(X;C_n)\models \Theta(g_0,g_1,\dots,g_{m-1})$ true. But before doing that, let us note that any
   $g_i$ can be written as follows:
   \begin{equation*}
     g_i=\widehat{g}_i+g'_i,
   \end{equation*}
   where
   $g'_i\in M'(X;C_n)$ and
   \begin{equation} \label{linearpart}
     \widehat{g}_i=\sum_{l=1}^{r_i}\alpha_{i,l} x_{k_{i,l}},
   \end{equation}
   moreover, for any fixed
   $i$ all indices
   $k_{i,l}$ are distinct and
   $\alpha_{i_k}\in R\backslash\{0\}$. So,
   $\widehat{g}_i$ is the linear part of
   $g_i$. Thus, we have
   \begin{equation} \label{2prod}
     \begin{split}
       [g_i,g_j] &=[\widehat{g}_i+g'_i,\widehat{g}_j+g'_j]=\\
                 &=[\widehat{g}_i,\widehat{g}_j]+[\widehat{g}_i,g'_j]+
                   [g'_i,\widehat{g}_j]+[g'_i,g'_j]= \\
                 &=[\widehat{g}_i,\widehat{g}_j]-[g'_j,\widehat{g}_i]+
                   [g'_i,\widehat{g}_j]
     \end{split}
   \end{equation}
   Besides, we need the following formula:
   \begin{equation}\label{3prod}
     \begin{split}
       [[g_{i},g_{j}],g_k]=& [[g_{i},g_{j}],\widehat{g}_k+g'_k]= \\
         =& [[g_{i},g_{j}],\widehat{g}_k]+[[g_{i},g_{j}],g'_k]= \\
         =& [[g_{i},g_{j}],\widehat{g}_k].
     \end{split}
   \end{equation}

   \begin{llll}\label{linpartexists}
     Let
     $m\geqslant 5$ and let
     $g_0,g_1,\dots,g_{m-1}$ be the elements of
     $M(X; C_n)$
     ($n\geqslant 3$) such that
     $M(X;C_n)\models \Theta(g_0,g_1,\dots,g_{m-1})$. Then for all
     $i\in \mathbb{Z}_m$ we have
     $\widehat{g}_{i}\neq 0$ in
     $M(X;C_n)$.
   \end{llll}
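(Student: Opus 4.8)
The plan is to argue by contradiction: assume that $\widehat{g}_p = 0$ in $M(X;C_n)$ for some $p\in\mathbb{Z}_m$, and derive a violation of $\Theta$. The whole point will be to play off one conjunct of the third block of $\Theta$ against the metabelian collapse recorded in~\eqref{3prod}.

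Concretely, first I would single out the conjunct of $\Theta$ obtained by taking $i=j=p$ in the third (triple-bracket) block. This is a legitimate conjunct: the side condition there is $|i-j|\cdot|(i+2)-j|\neq 1$, and for $i=j=p$ the first factor $|p-p|$ is already $0$, so the product is $0\neq 1$. Hence $[[z_p,z_{p+2}],z_p]\neq 0$ occurs in $\Theta$, and from $M(X;C_n)\models\Theta(g_0,\dots,g_{m-1})$ we obtain $[[g_p,g_{p+2}],g_p]\neq 0$ in $M(X;C_n)$. On the other hand, $[g_p,g_{p+2}]$ lies in the derived subalgebra $M'(X;C_n)$, which is abelian, so $[[g_p,g_{p+2}],g'_p]=0$; applying this in~\eqref{3prod} with $k=p$ gives $[[g_p,g_{p+2}],g_p]=[[g_p,g_{p+2}],\widehat{g}_p]$. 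Substituting the assumption $\widehat{g}_p=0$ turns the right-hand side into $[[g_p,g_{p+2}],0]=0$, contradicting the previous sentence. This contradiction shows $\widehat{g}_p\neq 0$ for every $p\in\mathbb{Z}_m$, as required.

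I do not expect a genuine obstacle in this argument; the only thing that needs a moment's attention is the bookkeeping that makes the above work, namely noticing that the third block of $\Theta$ actually contains the ``diagonal'' non-degeneracy assertions $[[z_i,z_{i+2}],z_i]\neq 0$ (and, more generally, $[[z_i,z_{i+2}],z_p]\neq 0$ for every $i$ with $p\neq i+1$), and that~\eqref{3prod} forces every such expression to equal $[[g_i,g_{i+2}],\widehat{g}_p]$, hence $0$ once $\widehat g_p=0$. If instead one wanted a proof that also exploits the first two blocks of $\Theta$ --- as will be needed in the lemmas that follow --- one could combine $[g_{p-1},g_p]=[g_p,g_{p+1}]=0$ with~\eqref{2prod} and $\widehat g_p=0$ to get $g'_p\in\mathcal{C}(\widehat g_{p-1})\cap\mathcal{C}(\widehat g_{p+1})$, and then invoke Theorem~\ref{centinter} and Lemma~\ref{centralizators} together with $[g_p,g_j]=[g'_p,\widehat g_j]\neq 0$ for $|j-p|>1$ to reach a contradiction; but the first route is shorter and entirely self-contained, so that is the one I would write out in full.
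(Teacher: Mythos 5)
Your proof is correct and uses essentially the same idea as the paper: once $\widehat{g}_p=0$, the element $g_p$ lies in the abelian derived subalgebra, so a triple bracket required to be nonzero by the third block of $\Theta$ collapses to zero via~(\ref{3prod}). The only difference is the choice of conjunct --- you use the diagonal one $[[z_p,z_{p+2}],z_p]\neq 0$ (admissible since $|p-p|\cdot|(p+2)-p|=0\neq 1$), while the paper uses $[[g_{i+3},g_{i+1}],g_i]\neq 0$, which is why it invokes $m\geqslant 5$; this is an immaterial variation.
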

   \begin{proof}
     Let
     $\widehat{g}_i=0$ for some
     $i$. Since
     $m\geqslant 5$ we have
     $|(i+3)-i|>1$. Obviously,
     $[[g_{i+3},g_{i+1}],g_i]=0$ because
     $M(X,C_n)$ is metabelian and
     $[g_{i+3},g_{i+1}],g_i \in M'(X;C_n)$.We get a contradiction to
(\ref{longformula}). Therefore, for any
     $i\in \mathbb{Z}_m$ we obtain
     $\widehat{g}_i \neq 0$.
   \end{proof}

   \begin{llll} \label{no3}
     Let
     $m\geqslant 5$ and let
     $g_0,g_1,\dots,g_{m-1}$ be the elements of
     $M(X; C_n)$
     ($n\geqslant 4$) such that
     $M(X;C_n)\models \Theta(g_0,g_1,\dots,g_{m-1})$. Then for all
     $i\in \mathbb{Z}_m$ we have
     $r_i\leqslant 2$, where
     $r_i$ are taken in
(\ref{linearpart}). In other words, the linear part of each
     $g_i$ has at most two summands. Moreover, if
     $r_i=2$ for some
     $i\in \mathbb{Z}_m$ then
     $|k_{i,1}-k_{i,2}|>1$, where
     $k_{i,1}$ and
     $k_{i,2}$ are also taken in
(\ref{linearpart})).
   \end{llll}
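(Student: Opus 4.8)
The plan is to argue by contradiction: suppose some $g_i$ has a linear part $\widehat{g}_i = \sum_{l=1}^{r_i} \alpha_{i,l} x_{k_{i,l}}$ that violates the stated conclusion, i.e.\ either $r_i \geqslant 3$, or $r_i = 2$ with $|k_{i,1}-k_{i,2}|=1$ (so the two generators in $\widehat{g}_i$ are adjacent in $C_n$). In each case I want to produce two indices $j,k \in \mathbb{Z}_m$ with $|j-i|\cdot|k-i|\ne 1$ (more precisely, matching the index pattern in $\Theta$, namely taking the bracket $[[z_i,z_{i+2}],z_j]$, so I will be working with the triple $i$, $i+2$, $j$) such that the corresponding triple bracket in $M(X;C_n)$ is forced to vanish, contradicting $M(X;C_n)\models\Theta(g_0,\dots,g_{m-1})$. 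By formula~(\ref{3prod}), any triple bracket $[[g_a,g_b],g_c]$ equals $[[g_a,g_b],\widehat{g}_c]$, and expanding $[g_a,g_b]$ via~(\ref{2prod}) shows it lies in $M'(X;C_n)$ and depends on $g_c$ only through $\widehat{g}_c$; so the whole question reduces to the linear parts $\widehat{g}_a,\widehat{g}_b,\widehat{g}_c$, which by Lemma~\ref{linpartexists} are all non-zero.

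The key computation is this: $[[g_i, g_{i+2}], g_j] = [[g_i,g_{i+2}],\widehat{g}_j]$, and since $M(X;C_n)$ is metabelian, $[g_i,g_{i+2}] = [\widehat{g}_i,\widehat{g}_{i+2}] - [g'_{i+2},\widehat{g}_i] + [g'_i,\widehat{g}_{i+2}] \in M'(X;C_n)$ acts on $\widehat{g}_j$ through the adjoint action, which by the module structure recalled in Section~\ref{sec2} is governed by $R[X]$ and in particular $[[g_i,g_{i+2}],\widehat{g}_j]=0$ as soon as $\widehat{g}_j$ (viewed as an element of the centralizer question) annihilates $[g_i,g_{i+2}]$. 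So I should use Lemma~\ref{centralizators}: if $\widehat{g}_j$ is a linear combination of three or more generators, or of two adjacent generators, then its derived centralizer is zero, i.e.\ $[h,\widehat{g}_j]=0$ for $h\in M'(X;C_n)$ forces $h=0$; conversely I want to exploit the other direction — that a nonzero $h=[g_i,g_{i+2}]$ bracketed against such a "bad" $\widehat{g}_j$ need not vanish, which is the wrong sign. Let me reorganize: the contradiction should come from choosing the role of the bad index correctly. If $\widehat{g}_i$ is bad (say $r_i\geqslant 3$, or $r_i=2$ with adjacent support), then I pick $j,k$ so that $[[g_j,g_k],g_i]$ appears as a conjunct of $\Theta$ required to be $\ne 0$; but $[[g_j,g_k],g_i]=[[g_j,g_k],\widehat{g}_i]$, and $[g_j,g_k]\in M'(X;C_n)$, so if $[g_j,g_k]\in\mathcal{C}(\widehat{g}_i)$ then this bracket vanishes. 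By parts a) and c) of Lemma~\ref{centralizators}, $\mathcal{C}(\widehat{g}_i)$ contains all of $M'(X;C_n)$ precisely... no: those parts say $\mathcal{C}=0$, i.e.\ \emph{nothing} survives being bracketed with $\widehat{g}_i$ — wait, that means $[h,\widehat{g}_i]=0 \iff h = 0$ only among $M'$, so a nonzero $[g_j,g_k]$ would give a \emph{nonzero} bracket, again the wrong direction. The correct reading: $\mathcal{C}(\widehat{g}_i)=0$ means $[[g_j,g_k],\widehat{g}_i]=0$ forces $[g_j,g_k]=0$; so I must instead exhibit indices $j,k$ for which $\Theta$ demands $[g_j,g_k]\ne 0$ while the bracket $[[g_j,g_k],\widehat{g}_i]$ is \emph{independently} forced to vanish — and that independent vanishing comes from a \emph{different} conjunct, namely $[[g_{j},g_{j+2}],g_{j+1}]$-type relations or from $\widehat{g}_i$ lying in a position incompatible with the graph $C_n$. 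So the real mechanism is: combine (i) Lemma~\ref{centralizators} applied to $\widehat{g}_i$, which classifies exactly which $h\in M'(X;C_n)$ satisfy $[h,\widehat{g}_i]=0$, with (ii) the fact that $\Theta$ forces many specific triple brackets $[[g_a,g_{a+2}],g_b]$ to be zero (when $|b-a|=|b-(a+2)|=1$) and many to be nonzero otherwise, pinning down $[g_a,g_{a+2}]$ tightly enough that it cannot centralize a "bad" $\widehat{g}_i$ unless $\widehat{g}_i$ is "good."

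Concretely, I expect the argument to run: fix $i$ and consider the three elements $h_1 = [g_{i-1},g_{i+1}]$, $h_2=[g_{i+1},g_{i+3}]$, $h_3=[g_{i-2},g_i]$ (indices mod $m$; legitimate since $m\geqslant 5$). Each lies in $M'(X;C_n)$ and, by the conjuncts of $\Theta$ of the form $[[z_a,z_{a+2}],z_b]=0$ with $b=a+1$, one checks $[h_1,\widehat{g}_i]=[[g_{i-1},g_{i+1}],g_i]=0$ while $[h_1,\widehat{g}_b]\ne 0$ for the appropriate nonadjacent $b$; feeding this into Lemma~\ref{centralizators} part b) pins $\widehat{g}_i$ down to have support of size exactly... and being nonadjacent. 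The delicate case is $m=4$ versus $m\geqslant 5$: the lemma is stated only for $m\geqslant 5$, and the remark preceding it about the non-uniqueness of the "midpoint" $t$ when $m=4$ is exactly why; so I will use $m\geqslant 5$ to guarantee that for each $i$ there is a unique index playing the role of $i+1$ relative to the pair $(i,i+2)$, which is what makes the triple-bracket conjuncts cut cleanly. I expect the \textbf{main obstacle} to be bookkeeping the two sub-cases ($r_i\geqslant 3$ and $r_i=2$ with adjacent support) uniformly: in both, the point is that $\widehat{g}_i$ has a nontrivial \emph{pair of adjacent generators in its support or a triple in its support}, so by parts a) and c) of Lemma~\ref{centralizators} its derived centralizer is $0$, forcing $[h_s,\widehat{g}_i]=0 \Rightarrow h_s=0$ for each $s$; but then $[g_{i-1},g_{i+1}]=0$ (resp.\ $[g_{i+1},g_{i+3}]=0$, $[g_{i-2},g_i]=0$), contradicting the conjunct $[z_{i-1},z_{i+1}]\ne 0$ of $\Theta$ (valid since $|(i+1)-(i-1)|=2>1$). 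Assembling this contradiction is the crux; the rest is routine index arithmetic in $\mathbb{Z}_m$.
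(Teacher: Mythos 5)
Your final mechanism is the same as the paper's: for a ``bad'' linear part $\widehat{g}_i$ (three or more summands, or two summands on adjacent generators), Lemma~\ref{centralizators}\,a),\,c) gives $\mathcal{C}(\widehat{g}_i)=0$; since $[g_{i-1},g_{i+1}]\in M'(X;C_n)$ lies in $\mathcal{C}(\widehat{g}_i)$, it must vanish, contradicting the conjunct $[z_{i-1},z_{i+1}]\neq 0$ of $\Theta$ (legitimate because $m\geqslant 5$). So the strategy is right, and your earlier detours about ``the wrong direction'' of the centralizer are eventually resolved correctly.

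However, there is a concrete gap in how you justify the crucial vanishing $[[g_{i-1},g_{i+1}],g_i]=0$. You attribute it to ``conjuncts of $\Theta$ of the form $[[z_a,z_{a+2}],z_b]=0$ with $b=a+1$,'' but $\Theta$ contains no such conjuncts: it asserts only $[z_i,z_{i+1}]=0$, certain double brackets $\neq 0$, and certain triple brackets $\neq 0$; for the ``midpoint'' triples it simply asserts nothing. The vanishing you need is true, but it must be \emph{derived} from the commutation conjuncts: since $[g_{i-1},g_i]=0$ and $[g_i,g_{i+1}]=0$ hold by $\Theta$, the Jacobi identity gives $[[g_{i-1},g_{i+1}],g_i]=[[g_{i-1},g_i],g_{i+1}]+[g_{i-1},[g_{i+1},g_i]]=0$, and then (\ref{3prod}) lets you replace $g_i$ by $\widehat{g}_i$, so that $[g_{i-1},g_{i+1}]\in\mathcal{C}(\widehat{g}_i)$. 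With that one-line repair (and dropping the unnecessary excursion through part b) of Lemma~\ref{centralizators} and the auxiliary elements $h_2,h_3$), your argument coincides with the paper's proof.
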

   \begin{proof}
     Let
     $\widehat{g}_i$ have at least three non-zero summands for some
     $i$, i.e. let
     $r_i\geqslant 3$.
     Since
     $[g_{i+1},g_{i-1}],g'_i\in M'(X;C_n)$,
(\ref{3prod}) implies
     \begin{equation*}
       0= [[g_{i+1},g_{i-1}],\widehat{g}_i]=
         \left[[g_{i+1},g_{i-1}],\sum_{l=1}^{r_i}\alpha_{i,l} x_{k_{i,l}}\right].
     \end{equation*}
     Hence,
     $[g_{i+1},g_{i-1}] \in \mathcal{C}\left(\sum_{l=1}^{r_{i}} \alpha_{i,l} x_{k_{i,l}}\right)$.
     By Lemma~%
\ref{centralizators}~c),
     $[g_{i+1},g_{i-1}]=0$. This contradicts to
(\ref{longformula}) because for
     $m\geqslant 5$ we have
     $|(i+1)-(i-1)|>1$.

     Let
     $\widehat{g}_i= \alpha_{i,1} x_{j}+\alpha_{i,2} x_{j+1}$ for some
     $j\in \mathbb{Z}_m$ and for
     $\alpha_{i,1},\alpha_{i,2} \in R\backslash\{0\}$ (i.e in the case of
     $r_i=2$ and
     $|k_{i,1}-k_{i,2}|=1$). Then by Lemma~%
\ref{centralizators}~a),
     $[g_{i+1},g_{i-1}]=0$ and
     we also obtain a contradiction.
   \end{proof}

   \begin{llll}\label{noswitchr}
     Let
     $m\geqslant 5$ and let
     $g_0,g_1,\dots,g_{m-1}$ be the elements of
     $M(X; C_n)$
     ($n\geqslant 4$) such that
     $M(X;C_n)\models \Theta(g_0,g_1,\dots,g_{m-1})$. If
     $r_{i-1}=r_{i+1}=1$ for some
     $i\in \mathbb{Z}_m$, then
     $r_i=1$ where
     $r_{i-1},r_i,r_{i+1}$ are taken in
(\ref{linearpart}).
   \end{llll}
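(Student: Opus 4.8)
The plan is to argue by contradiction. Suppose $r_i\neq 1$; by Lemma~\ref{no3} then $r_i=2$, so $\widehat g_i=\alpha x_a+\beta x_b$ with $\alpha,\beta\in R\setminus\{0\}$ and $|a-b|>1$. The first move is to eliminate $g_i$: applying the Jacobi identity to $g_{i-1},g_i,g_{i+1}$ and using $[g_{i-1},g_i]=[g_i,g_{i+1}]=0$ gives $[[g_{i-1},g_{i+1}],g_i]=0$, and hence by~(\ref{3prod}) the element $h:=[g_{i-1},g_{i+1}]$ satisfies $[h,\widehat g_i]=0$, i.e. $h\in\mathcal C(\alpha x_a+\beta x_b)$. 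By Theorem~\ref{centinter} this equals $\mathcal C(x_a)\cap\mathcal C(x_b)$, so $h.x_a=h.x_b=0$. On the other hand, since $m\geq 5$ we have $|(i-1)-(i+1)|>1$, so $h\neq 0$ because $M(X;C_n)\models\Theta(g_0,\dots,g_{m-1})$.

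Next I locate the linear parts around $i$. Comparing in $[g_{i-1},g_i]=0$ and in $[g_i,g_{i+1}]=0$ the monomials of total degree $2$: by~(\ref{2prod}) and metabelianness these are exactly $[\widehat g_{i-1},\widehat g_i]$, respectively $[\widehat g_i,\widehat g_{i+1}]$, so both vanish (the relations of $M(X;C_n)$ being homogeneous). Writing $\widehat g_{i+1}=cx_q$ with $c\neq 0$ (allowed as $r_{i+1}=1$), the identity $c(\alpha[x_q,x_a]+\beta[x_q,x_b])=0$ forces $[x_q,x_a]=[x_q,x_b]=0$: since $a\neq b$, the nonzero ones among these would be distinct basis monomials appearing with nonzero coefficients, which is impossible. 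Thus $x_q$ is adjacent in $C_n$ to both $x_a$ and $x_b$ (it is equal to neither, since $|a-b|>1$); as two vertices of $C_n$ at distance $>1$ can share a neighbour only when they are at distance exactly $2$, we may rename so that $b=a+2$ and $x_q=x_{a+1}$. Finally $M(X;C_n)\models\Theta$ gives $[[g_{i-1},g_{i+1}],g_{i+1}]\neq 0$, i.e. $c\,h.x_{a+1}\neq 0$, so $h.x_{a+1}\neq 0$.

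The core of the proof is a two-step propagation along the cycle, and I expect both finding this step and closing the argument afterwards to be the main difficulty. I claim $\widehat g_{i+2}=d_2x_{a+1}$ and $\widehat g_{i+3}=d_3x_{a+1}$ with $d_2,d_3\neq 0$. Comparing degree-$2$ parts in $[g_{i+1},g_{i+2}]=0$ gives $[\widehat g_{i+1},\widehat g_{i+2}]=0$, whence (as in the previous paragraph) every generator occurring in $\widehat g_{i+2}$ lies in the closed neighbourhood $\{x_a,x_{a+1},x_{a+2}\}$ of $x_{a+1}$. Since $m\geq 5$, the conjunct $[[g_{i-1},g_{i+1}],g_{i+2}]\neq 0$ of $\Theta$ holds, i.e. $[h,\widehat g_{i+2}]\neq 0$; but $[h,x_a]=[h,x_{a+2}]=0$ and $[h,x_{a+1}]=h.x_{a+1}\neq 0$, so this says exactly that $x_{a+1}$ occurs in $\widehat g_{i+2}$ with nonzero coefficient. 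By Lemma~\ref{no3}, $r_{i+2}\leq 2$ and, if $r_{i+2}=2$, the two generators differ by more than $1$; no two-element subset of $\{x_a,x_{a+1},x_{a+2}\}$ containing $x_{a+1}$ has that property, so $r_{i+2}=1$ and $\widehat g_{i+2}=d_2x_{a+1}$, $d_2\neq 0$. Repeating the argument with $(i+2,i+3)$ in place of $(i+1,i+2)$ — the conjunct $[[g_{i-1},g_{i+1}],g_{i+3}]\neq 0$ of $\Theta$ is available for $m\geq 5$ — gives $\widehat g_{i+3}=d_3x_{a+1}$, $d_3\neq 0$.

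Now I reach the contradiction. Set $y:=x_{a+1}$, $d_{i+1}:=c$, so $\widehat g_k=d_ky$ for $k=i+1,i+2,i+3$. Since $[y,y]=0$, formula~(\ref{2prod}) and metabelianness give $[g_j,g_k]=(d_kg'_j-d_jg'_k).y$ for $j,k\in\{i+1,i+2,i+3\}$; also, for $v\in M'(X;C_n)$ the equality $[v,y]=0$ means precisely $v\in\mathcal C(y)$. Hence $[g_{i+1},g_{i+2}]=0$ and $[g_{i+2},g_{i+3}]=0$ translate into $d_{i+2}g'_{i+1}-d_{i+1}g'_{i+2}\in\mathcal C(y)$ and $d_{i+3}g'_{i+2}-d_{i+2}g'_{i+3}\in\mathcal C(y)$. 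Multiplying the first relation by $d_{i+3}$, the second by $d_{i+1}$, and adding (using that $\mathcal C(y)$ is an $R$-submodule), we obtain $d_{i+2}\bigl(d_{i+3}g'_{i+1}-d_{i+1}g'_{i+3}\bigr)\in\mathcal C(y)$, that is, $d_{i+2}\cdot\bigl((d_{i+3}g'_{i+1}-d_{i+1}g'_{i+3}).y\bigr)=0$, i.e. $d_{i+2}\cdot[g_{i+1},g_{i+3}]=0$. Since $M(X;C_n)$ is a free $R$-module, hence torsion-free, and $d_{i+2}\neq 0$, this gives $[g_{i+1},g_{i+3}]=0$; but $|(i+1)-(i+3)|=2>1$ for $m\geq 5$, so $[g_{i+1},g_{i+3}]\neq 0$ by hypothesis — a contradiction. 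Therefore $r_i=1$.
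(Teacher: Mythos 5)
Your argument is correct, but it takes a genuinely different route from the paper's. The paper fixes $\widehat g_{i-1}=\gamma x_s$, $\widehat g_{i+1}=\delta x_t$ and runs a case analysis on how $s,t$ sit relative to the support $\{x_j,x_k\}$ of $\widehat g_i$: the case $s\in\{j,k\}$ is killed by Lemma~\ref{no3}; the case $t=s$ via $[g_{i-2},g_i]\in\mathcal{C}(x_s)$ and the conjunct $[[g_{i-2},g_i],g_{i+1}]\neq 0$; and the case $t\neq s$, which forces $n=4$, by a further split on $\widehat g_{i+2}$. You never touch $\widehat g_{i-1}$ or $g_{i-2}$: you locate the common neighbour $x_{a+1}$ of $x_a,x_b$, observe that $h=[g_{i-1},g_{i+1}]$ satisfies $h.x_a=h.x_{a+2}=0$ while $h.x_{a+1}\neq 0$, propagate along the cycle to get $\widehat g_{i+1},\widehat g_{i+2},\widehat g_{i+3}$ all nonzero multiples of $x_{a+1}$, and then eliminate $g'_{i+2}$ inside the submodule $\mathcal{C}(x_{a+1})$ to obtain $d_{i+2}[g_{i+1},g_{i+3}]=0$, hence $[g_{i+1},g_{i+3}]=0$ by torsion-freeness, contradicting (\ref{longformula}). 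What your route buys: no special case $n=4$, uniform treatment of all $n\geqslant 4$, and you never use $r_{i-1}=1$, so you in fact prove a slightly stronger statement ($r_{i+1}=1$ alone forces $r_i=1$); the cost is that you reach out to $g_{i+3}$ (legitimate for $m\geqslant 5$, and your index checks against $\Theta$ are right) and that you need $M(X;C_n)$ to be a free, hence torsion-free, $R$-module, which the basis description in Section~\ref{sec2} does supply. Two small touch-ups: to pass from $r_i\neq 1$ to $r_i=2$ you should also cite Lemma~\ref{linpartexists} to exclude $r_i=0$; and once you know $\widehat g_{i+1}$ and $\widehat g_{i+2}$ are both multiples of $x_{a+1}$ you could finish as the paper does in its case 2-2-2, since $[g_i,g_{i+2}]\in\mathcal{C}(x_{a+1})=\mathcal{C}(\widehat g_{i+2})$ gives $[[g_i,g_{i+2}],g_{i+2}]=0$, already contradicting (\ref{longformula}); that shortcut would spare both the step to $g_{i+3}$ and the torsion argument.
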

   \begin{proof}
     Suppose that $r_{i-1}=r_{i+1}=1$ and
     $r_i=2$ for some
     $i\in \mathbb{Z}_m$. So, let
     $\widehat{g}_i=\alpha x_j+\beta x_k$ (where
     $|j-k|>1$),
     $\widehat{g}_{i-1}=\gamma x_s$, and
     $\widehat{g}_{i+1}=\delta x_t$.

     By
(\ref{longformula}), we have
     $[g_{i-1},g_i]=0$. Therefore, by
(\ref{2prod}) and homogeneity of identities and relations in a
     metabelian partially commutative Lie algebra   in
     $M(X;C_n)$, we obtain
     $[\widehat{g}_{i-1},\widehat{g}_{i}]=0$. That is
     $[\alpha x_j+\beta x_k,\gamma x_s]=\alpha\gamma  [x_j,x_s]+\beta\gamma[x_k,x_s]=0$.
     Consequently,
     $|j-s|\leqslant 1$ and
     $|k-s|\leqslant 1$. There are two cases possible: either
     $s$ is equal to one of the elements
     $j,k$ or it is distinct from both of them.\\

     \noindent
     1. We can assume without loss of generality that
     $j=s$. Since
     $r_i=2$, we have
     $j\neq k$. Therefore,
     $|k-j|=1$. But this contradicts to Lemma~%
\ref{no3}.\\

     \noindent
     2. Let
     $|j-s|=|k-s|=1$. We can assume without loss of generality that
     $j=s-1$,
     $k=s+1$.

     Consider
     $[g_{i},g_{i+1}]$. By the same argument as above,
     $|j-t|=|k-t|=1$. We again get two cases.\\

     \noindent
     2-1. Let
     $t=s$. By
(\ref{longformula}),
     $[[g_{i-2},g_i],g_{i-1}]=0$. Consequently, by~%
(\ref{3prod}), we get
     $[[g_{i-2},g_i],\widehat{g}_{i-1}]=0$. That means
     $[g_{i-2},g_i] \in \mathcal{C}(x_s)$. Now, by~%
(\ref{3prod}), we obtain
     $[[g_{i-2},g_i],g_{i+1}]=[[g_{i-2},g_i],\widehat{g}_{i+1}]=0$
     (remind that
     $\widehat{g}_{i+1}=\delta x_s$). We get a contradiction to
(\ref{longformula}) because
     $|(i-2)-(i+1)|\neq 1$ in
     $\mathbb{Z}_m$ for
     $m\geqslant 5$.\\

     \noindent
     2-2. Let
     $t\neq s$. This is possible only if
     $n=4$. Renumbering generators if it is necessary, we can suppose that
     $\widehat{g}_i=\alpha x_1 +\beta x_3$,
     $\widehat{g}_{i-1}=\gamma x_0$,
     $\widehat{g}_{i+1}=\delta x_2$.

     Consider
     $g_{i+2}$. By
(\ref{2prod}) and homogeneity of identities and relations in a
     metabelian partially commutative Lie algebra,
     $[\widehat{g}_{i+1},\widehat{g}_{i+2}]=0$. Remind that
     $[x_p,x_q]=0$ iff
     $|p-q|\leqslant 1$. Consequently,
     $\widehat{g}_{i+2}=\zeta x_1+\eta x_2+\theta x_3$, where
     $\zeta,\eta,\theta \in R$. Moreover, by Lemma~%
\ref{no3}, either
     $\eta=0$ or
     $\zeta=\theta=0$.\\

     \noindent
     2-2-1. If
     $\eta=0$, then
     $g_{i+2}=\zeta x_1+ \theta x_3$. By
(\ref{3prod}), we have
     $[[g_{i-1},g_{i+1}],\widehat{g}_{i}]=0$. Hence,
     $[g_{i-1},g_{i+1}]\in \mathcal{C} (\widehat{g}_{i})=\mathcal{C}(x_1)\cap \mathcal{C}(x_3)$.
     But in this case
     $[g_{i-1},g_{i+1}]\in \mathcal{C}(\widehat{g}_{i+2})$ and it
     does not matter whether neither
     $\zeta$ nor
     $\theta$ is equal to
     $0$. So,
     $[[g_{i-1},g_{i+1}],\widehat{g}_{i+2}]=0$. Since
     $m\geqslant 5$, we have
     $|(i-1)-(i+2)|\neq 1$ in
     $\mathbb{Z}_m$ and we get a contradiction to
(\ref{longformula}).\\

     \noindent
     2-2-2. If
     $\zeta=\theta=0$, then Lemma~%
\ref{linpartexists} implies
     $\eta\neq 0$. We have
     $g_{i+2}=\eta x_2$. By
(\ref{3prod}), we obtain
     $[[g_{i},g_{i+2}],\widehat{g}_{i+1}]=0$. Therefore,
     $[g_{i},g_{i+2}]\in \mathcal{C} (\widehat{g}_{i+1})=\mathcal{C}(x_2)$.
     But in this case
     $[g_{i},g_{i+2}]\in \mathcal{C}(\widehat{g}_{i+2})$. So, by~%
(\ref{3prod}), we obtain
     $[[g_{i},g_{i+2}],g_{i+2}]=0$. Again we get a contradiction to
(\ref{longformula}).
   \end{proof}

   \begin{llll}\label{linpartonecomp}
     Let
     $m\geqslant 5$ and let
     $g_0,g_1,\dots,g_{m-1}$ be the elements of
     $M(X; C_n)$
     ($n\geqslant 4$) such that
     $M(X;C_n)\models \Theta(g_0,g_1,\dots,g_{m-1})$. Then
     $r_i=1$ for all
     $i\in \mathbb{Z}_m$.
   \end{llll}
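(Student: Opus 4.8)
The plan is to argue by contradiction, building on Lemmas~\ref{linpartexists}--\ref{noswitchr}. By Lemma~\ref{linpartexists} every $\widehat{g}_i\neq 0$, by Lemma~\ref{no3} every $r_i\in\{1,2\}$, and when $r_i=2$ the two generators occurring in $\widehat{g}_i$ are non-adjacent in $C_n$. Assume, for a contradiction, that $r_i=2$ for some $i$. The contrapositive of Lemma~\ref{noswitchr} says that then $r_{i-1}=2$ or $r_{i+1}=2$, so the set $\{\,i\in\mathbb{Z}_m:r_i=2\,\}$ has no isolated vertex in the cycle $\mathbb{Z}_m$; in particular it contains two consecutive indices, and after a cyclic renaming of $z_0,\dots,z_{m-1}$ we may assume $r_0=r_1=2$. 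Write $\widehat{g}_0=\alpha x_a+\beta x_b$ and $\widehat{g}_1=\gamma x_c+\delta x_d$ with $\alpha,\beta,\gamma,\delta\in R\backslash\{0\}$, $|a-b|>1$, and $|c-d|>1$.

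First I would extract a dichotomy from the relation $[g_0,g_1]=0$. By (\ref{2prod}) and homogeneity of the relations and identities of $M(X;C_n)$ (as in the proof of Lemma~\ref{noswitchr}) this gives $[\widehat{g}_0,\widehat{g}_1]=0$, i.e.
\[
  \alpha\gamma[x_a,x_c]+\alpha\delta[x_a,x_d]+\beta\gamma[x_b,x_c]+\beta\delta[x_b,x_d]=0 .
\]
For $|p-q|>1$ the element $[x_p,x_q]$ is, up to sign, a nonzero basis monomial, distinct non-adjacent pairs give linearly independent such monomials, and $[x_p,x_q]=0$ when $|p-q|\leqslant 1$. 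A short bookkeeping of which of the four pairs $\{a,c\},\{a,d\},\{b,c\},\{b,d\}$ may coincide or be adjacent then shows that one of the following holds: (I) $\{a,b\}=\{c,d\}$, so that $\widehat{g}_0$ and $\widehat{g}_1$ are supported on the same non-adjacent pair $\{x_a,x_b\}$ and, comparing the one surviving coefficient, $\widehat{g}_1$ is proportional to $\widehat{g}_0$; or (II) $n=4$ and $\{a,b\},\{c,d\}$ are the two diagonal pairs $\{x_0,x_2\},\{x_1,x_3\}$ of $C_4$.

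The substance is ruling both cases out. In case (I) I would propagate the support constraint around $\mathbb{Z}_m$, applying the same dichotomy to $[g_i,g_{i+1}]=0$ when $r_i=r_{i+1}=2$ and a degree-one computation when one of $r_i,r_{i+1}$ equals $1$. If $x_a$ and $x_b$ are at distance $\geqslant 3$ in $C_n$, this forces every $r_i=2$, with every $\widehat{g}_i$ supported on $\{x_a,x_b\}$ and proportional to $\widehat{g}_0$; writing $\widehat{g}_i=\alpha_i x_a+\beta_i x_b$ and $p_i=g'_i.\widehat{g}_0$, the relations $[g_i,g_{i+1}]=0$ reduce (via (\ref{2prod}) and $[\widehat{g}_i,\widehat{g}_{i+1}]=0$) to $\alpha_{i+1}p_i=\alpha_i p_{i+1}$, hence (since $R$ is an integral domain) to $\alpha_j p_i=\alpha_i p_j$ for all $i,j$, so that $[g_i,g_j]=0$ for all $i,j$ — contradicting $[g_0,g_2]\neq 0$. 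When $x_a,x_b$ are at distance $2$, and in case (II) — which forces $n=4$, where $M'(X;C_4)$ is the direct sum of the subspace centralized by $\{x_1,x_3\}$ and the one centralized by $\{x_0,x_2\}$ (cf.\ Lemma~\ref{centralizators}) — I would instead feed suitable brackets $[g_i,g_{i+2}]$ into the cubic inequalities of $\Phi(m)$, rewritten via (\ref{3prod}) as $[[g_i,g_{i+2}],\widehat{g}_j]\neq 0$, and invoke Theorem~\ref{centinter} together with Lemma~\ref{centralizators} to place some $[g_i,g_{i+2}]$ inside a centralizer $\mathcal{C}(\widehat{g}_j)$ that $\Phi(m)$ forbids it to lie in.

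I expect this last step to be the main obstacle. The $n=4$ diagonal configuration is awkward precisely because the "obvious" cubic witnesses — those testing $g_0$ against $[g_{m-1},g_1]$, or $[g_0,g_2]$ against $g_1$ — are exactly the excluded instances $j=i+1$ of $\Phi(m)$, so the contradiction has to be routed through cubics involving $g_2,g_3$ (or $g_{m-2},g_{m-1}$); keeping track of which two-element supports survive along the cycle while doing this is the bookkeeping that must be organized with care.
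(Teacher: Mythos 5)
Your reduction is the same as the paper's: Lemmas~\ref{linpartexists}--\ref{no3} give $r_i\in\{1,2\}$ with non-adjacent support when $r_i=2$, and Lemma~\ref{noswitchr} reduces everything to ruling out two consecutive indices with $r_i=r_{i+1}=2$; the dichotomy you extract from $[\widehat{g}_i,\widehat{g}_{i+1}]=0$ (equal supports, or $n=4$ with the two diagonal supports) also matches the paper's case analysis of (\ref{length2}). But the decisive cases are not actually closed in your proposal, and you say so yourself: the $n=4$ diagonal configuration is left as ``the main obstacle,'' with only a vague instruction to ``feed suitable brackets $[g_i,g_{i+2}]$ into the cubic inequalities.'' That is precisely where the paper does its real work, so this is a genuine gap, not a routine verification. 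The paper closes it by looking at $\widehat{g}_{i+2}$: since $[g_{i+1},g_{i+2}]=[g_{i+3},g_{i+2}]=0$, the Jacobi identity and (\ref{3prod}) give $[g_{i+1},g_{i+3}]\in\mathcal{C}(\widehat{g}_{i+2})$, and because $[g_{i+1},g_{i+3}]\neq 0$, Lemma~\ref{centralizators} forces $\widehat{g}_{i+2}$ to be supported on one of the two diagonals of $C_4$; in either subcase one then gets $[g_i,g_{i+2}]$ (resp.\ $[g_{i-1},g_{i+1}]$) into $\mathcal{C}(\widehat{g}_{i+2})$ and concludes $[[g_i,g_{i+2}],g_{i+2}]=0$ (resp.\ $[[g_{i-1},g_{i+1}],g_{i+2}]=0$), contradicting $\Theta$. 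Note also that your worry about which cubic witnesses are available is partly misplaced: $\Theta$ only excludes $j$ with $|i-j|\cdot|(i+2)-j|=1$, so the ``self-tests'' $[[z_i,z_{i+2}],z_{i+2}]\neq 0$ and $[[z_{i-1},z_{i+1}],z_{i+2}]\neq 0$ are legitimate requirements, and they are exactly what the paper uses.

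Your treatment of the equal-support case is also much heavier than necessary, and incomplete as stated: the propagation argument is only sketched for supports at distance $\geqslant 3$, while the distance-$2$ situation is deferred to the unresolved $n=4$ discussion. The paper kills the equal-support case in one stroke, with no propagation around $\mathbb{Z}_m$ and no proportionality of coefficients: if $\widehat{g}_i$ and $\widehat{g}_{i+1}$ are both supported on the non-adjacent pair $\{x_j,x_k\}$, then by Theorem~\ref{centinter} $\mathcal{C}(\widehat{g}_i)=\mathcal{C}(x_j)\cap\mathcal{C}(x_k)=\mathcal{C}(\widehat{g}_{i+1})$; since $[[g_{i-1},g_{i+1}],g_i]=0$ (Jacobi, from $[g_{i-1},g_i]=[g_{i+1},g_i]=0$) and (\ref{3prod}) give $[g_{i-1},g_{i+1}]\in\mathcal{C}(\widehat{g}_i)$, one gets $[[g_{i-1},g_{i+1}],g_{i+1}]=0$, contradicting $\Theta$. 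You should replace your case (I) machinery by this observation and, for case (II), supply the $\widehat{g}_{i+2}$ analysis described above; without that, the proof is not complete.
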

   \begin{proof}
     By Lemma~%
\ref{noswitchr} we are left to show that the case of
     $r_i=r_{i+1}=2$ is impossible.

     Suppose that
     $\widehat{g}_i=\alpha x_j+ \beta x_k$ and
     $\widehat{g}_{i+1}=\gamma x_s+ \delta x_t$, where
     $\alpha,\beta,\gamma,\delta \neq 0$ and
     $|j-k|>1$,
     $|s-t|>1$. Then
(\ref{2prod}),
 (\ref{longformula}), and homogeneity of identities and relations of a partially commutative metabelian Lie
     algebra imply
     \begin{equation}\label{length2}
       0=[\widehat{g}_i,\widehat{g}_{i+1}]=\alpha\gamma[x_j,x_s]+\alpha\delta[x_j,x_t]+\beta\gamma[x_k,x_s]+\beta\delta[x_k,x_t].
     \end{equation}
     Indeed, only these summands in
(\ref{2prod}) are the products of two generators. Consider several cases.\\

     \noindent
     1. Let
     $j,k,s,t$ be distinct. Then
(\ref{length2}) and homogeneity of identities and relations of
     partially commutative metabelian Lie algebra imply
     \begin{equation} \label{2prod0}
       [x_j,x_s]=[x_j,x_t]=[x_k,x_s]=[x_k,x_t]=0.
     \end{equation}
     By the first two parts of
(\ref{2prod0}),
     $|j-s|=|j-t|=1$. Then
     $x_s,x_j,x_t$ are connected successfully in
     $C_n$, namely
     $x_j$ is adjacent to both other vertices. For the same
     reason, we obtain
     $|k-s|=|k-t|=1$ (considering two last parts of
(\ref{2prod0})). Therefore
     $x_k$ is also adjacent to both
     $x_s$, and
     $x_t$. This is possible only in the case of
     $n=4$.

     We can assume without loss of generality that
     $\widehat{g}_i=\alpha x_0+\beta x_2$, а
     $\widehat{g}_{i+1}=\gamma x_1+\delta x_3$.

     Consider
     $g_{i+2}$. Let
     $\widehat{g}_{i+2}=\zeta x_0+\eta x_1+\theta x_2+ \kappa x_3$.
     By
(\ref{3prod}),
     $[[g_{i+1},g_{i+3}],\widehat{g}_{i+2}]=0$, therefore,
     $[g_{i+1},g_{i+3}] \in \mathcal{C}(\widehat{g}_{i+2})$. It follows from Lemma~%
\ref{centralizators} that either
     $\zeta=\theta=0$ or
     $\eta=\kappa=0$.\\

     \noindent
     1-1. Let
     $\zeta=\theta=0$. Then
     $\widehat{g}_{i+2}=\eta x_1+\kappa x_3$. By
(\ref{3prod}),
     $[[g_{i},g_{i+2}],\widehat{g}_{i+1}]=0$, therefore
     $[g_{i},g_{i+2}] \in \mathcal{C}(\widehat{g}_{i+1})=\mathcal{C}(x_1)\cap \mathcal{C}(x_3)$. But in this case,
     $[g_{i},g_{i+2}] \in \mathcal{C}(\widehat{g}_{i+2})$ and it
     does not matter whether neither
     $\eta$ nor
     $\kappa$ is equal to
     $0$.
     Hence, by~%
(\ref{3prod}), we obtain
     $[[g_{i},g_{i+2}],g_{i+2}]=0$, but this contradicts to
 (\ref{longformula}).\\

     \noindent
     1-2. Let
     $\eta=\kappa=0$. Then
     $\widehat{g}_{i+2}=\zeta x_0+\theta x_2$. By analogy with
     Case\,1-1,
(\ref{3prod}) implies
     $[[g_{i-1},g_{i+1}],\widehat{g}_{i}]=0$. Therefore,
     $[g_{i-1},g_{i+1}] \in \mathcal{C}(\widehat{g}_{i})=\mathcal{C}(x_0)\cap \mathcal{C}(x_2)$. But then
     $[g_{i-1},g_{i+1}] \in \mathcal{C}(\widehat{g}_{i+2})$ and it
     does not matter whether neither
     $\zeta$ nor
     $\theta$ is equal to
     $0$. Thus, by
(\ref{3prod}), we obtain
     $[[g_{i-1},g_{i+1}],g_{i+2}]=0$. But this contradicts to
 (\ref{longformula}) because if
     $m\geqslant 5$ then
     $|(i-1)-(i+2)|\neq 1$. \\

     \noindent
     2. Let
     $\{j,k\}\cap \{s,t\}$ have one element. Without loss of
     generality, we can suppose that
     $j=s$,
     $k\neq t$. In this case,
(\ref{length2}) implies
     $\alpha\delta[x_j,x_t]+\beta\gamma[x_k,x_j]+\beta\delta[x_k,x_t]=0$.
     By homogeneity of identities and relations of a partially commutative metabelian Lie
     algebra,
     $[x_j,x_t]=[x_k,x_j]=[x_k,x_t]=0$. Since
     $j\neq k$ and
     $j\neq t$ we have
     $|j-k|=|j-t|=1$. As
     $k\neq t$, we obtain
     $|k-t|=2$. But then
     $[x_k,x_t]\neq 0$. It s a contradiction. Consequently this case is impossible.\\

     \noindent
     3. Finally, let
     $\{j,k\}=\{s,t\}$. We can assume without loss of generality that
     $s=j$ and
     $t=k$. So,
     $g_{i+1}=\gamma x_j+\delta x_k$. By
(\ref{3prod}) we obtain
     $[[g_{i-1},g_{i+1}],\widehat{g}_{i}]=0$. Thus,
     $[g_{i-1},g_{i+1}] \in \mathcal{C}(\widehat{g}_{i})=\mathcal{C}(x_j)\cap \mathcal{C}(x_k)$. Then
     $[g_{i-1},g_{i+1}] \in \mathcal{C}(\widehat{g}_{i+1})$.
     Consequently,
     $[g_{i-1},g_{i+1}]\in \mathcal{C}(\widehat{g}_{i+1})$. So, by
(\ref{3prod}) we obtain
     $[[g_{i-1},g_{i+1}],g_{i+1}]=0$, but this contradicts to
(\ref{longformula}).

     Taking into account Lemma~%
\ref{noswitchr},  we have considered all cases and have seen that
     none of them is appropriate. This completes the proof.
   \end{proof}

   \begin{ttt}
     Let
     $R$ be an integral domain having
     $\mathbb{Z}$ as a subring and let
     $X=\{x_1,x_2,\dots, x_n\}$,
     $Y=\{y_1,y_2,\dots, y_m\}$. The partially commutative metabelian Lie algebras
     $M(X;C_n)$ and
     $M(Y;C_m)$ are universally equivalent iff
     $m=n$.
   \end{ttt}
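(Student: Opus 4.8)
The ``if'' direction is immediate: when $m=n$ the graphs $C_n$ and $C_m$ are isomorphic, hence $M(X;C_n)\cong M(Y;C_m)$, and isomorphic algebras are universally equivalent. For the ``only if'' direction I argue contrapositively and use that universal equivalence coincides with existential equivalence, so that, assuming $m\neq n$, it suffices to exhibit a single $\forall$- or $\exists$-sentence holding in exactly one of the two algebras.

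The first main step is the claim: \emph{for $m\geqslant 5$ and every $n\geqslant 3$ one has $M(X;C_n)\models\Phi(m)$ if and only if $n=m$}, where $\Phi(m)$ is the sentence \eqref{longformula}; its ``if'' half is the computation made right after \eqref{longformula}, with witnesses $z_i:=x_i$. Granting the claim, suppose first that $\max(m,n)\geqslant 5$: then $\Phi(\max(m,n))$ holds in whichever of the two algebras has defining cycle of length $\max(m,n)$ but, by the ``only if'' half, fails in the other one, whose cycle has a different length, so the algebras are not universally equivalent. The only remaining possibility with $m\neq n$ is $\{m,n\}=\{3,4\}$, and there $M(X;C_3)$ is abelian while $M(X;C_4)$ is not (it contains $x_0,x_2$ with $[x_0,x_2]\neq 0$), so the $\forall$-sentence $\forall u\,\forall v\;[u,v]=0$ already separates them. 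Thus the theorem will follow from the claim.

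To prove the ``only if'' half of the claim, I would start from elements $g_0,\dots,g_{m-1}\in M(X;C_n)$ realizing $\Theta$. If $n=3$ the algebra is abelian, so the subformulas $[g_i,g_j]\neq 0$ cannot hold and there is nothing to prove; hence $n\geqslant 4$, and then Lemmas~\ref{linpartexists}--\ref{linpartonecomp} apply and give $\widehat{g}_i=\alpha_i x_{k_i}$ for a single index $k_i\in\mathbb{Z}_n$ and $\alpha_i\in R\setminus\{0\}$. Isolating the degree-two homogeneous component on the right-hand side of \eqref{2prod} (the other two summands there have degree at least three), the relation $[g_i,g_{i+1}]=0$ forces $\alpha_i\alpha_{i+1}[x_{k_i},x_{k_{i+1}}]=0$, hence $|k_i-k_{i+1}|\leqslant 1$ in $\mathbb{Z}_n$ for all $i\in\mathbb{Z}_m$; so $i\mapsto k_i$ is a closed walk in $C_n$ every step of which stays put or passes to a neighbour. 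Once this walk is known to be injective, consecutive steps must point in the same direction, so the walk traverses $m$ distinct vertices of $C_n$ monotonically before closing up, which forces $m=n$, as required.

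The hard part will be proving the injectivity of $i\mapsto k_i$. The available tools are the remaining clauses of $\Theta$: by \eqref{3prod}, $[[g_i,g_{i+2}],g_j]\neq 0$ says exactly that $[g_i,g_{i+2}]\notin\mathcal{C}(x_{k_j})$, and this is demanded for every $j$ with $|i-j|\cdot|(i+2)-j|\neq 1$, i.e. for all $j\neq i+1$ when $m\geqslant 5$; so each $[g_i,g_{i+2}]$ is nonzero and escapes the centralizers of many generators. The plan is to feed these non-degeneracy conditions, together with the explicit centralizer description of Lemma~\ref{centralizators} and formulas \eqref{2prod}--\eqref{3prod}, into the same kind of case analysis as in Lemmas~\ref{no3}--\ref{linpartonecomp}, showing that any repeated value $k_a=k_b$ --- and in particular a pause $k_i=k_{i+1}$ or a backtrack $k_i=k_{i+2}$ --- would force some $[[g_i,g_{i+2}],g_j]$, or some $[g_i,g_j]$ with $|i-j|>1$, to vanish, contrary to $\Theta$. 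This combinatorial and centralizer-theoretic bookkeeping is where the real work lies; everything else is either formal model theory or already contained in the preceding lemmas.
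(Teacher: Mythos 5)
Your skeleton --- the WLOG reduction via existential sentences, the $\{3,4\}$ case via abelianness of $M(X;C_3)$, the use of Lemmas~\ref{linpartexists}--\ref{linpartonecomp} to get $\widehat{g}_i=\alpha_i x_{k_i}$, the extraction of $|k_i-k_{i+1}|\leqslant 1$ from the degree-two component of \eqref{2prod}, and the final counting step ``an injective closed unit-step walk of length $m$ in $C_n$ forces $m=n$'' --- matches the paper and is sound. But the step you yourself single out as ``the hard part'', the injectivity of $i\mapsto k_i$, is precisely where you stop at a plan (``the same kind of case analysis as in Lemmas~\ref{no3}--\ref{linpartonecomp}'') rather than an argument, so the proof is not complete; and the anticipated heavy case analysis is not what is needed. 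The paper disposes of a repetition $k_a=k_b$ ($a\neq b$) by a short centralizer trick: since $[g_{a-1},g_a]=[g_a,g_{a+1}]=0$, the Jacobi identity gives $[[g_{a-1},g_{a+1}],g_a]=0$; by \eqref{3prod} this equals $[[g_{a-1},g_{a+1}],\widehat{g}_a]$, so $[g_{a-1},g_{a+1}]\in\mathcal{C}(x_{k_a})=\mathcal{C}(x_{k_b})$, and hence $[[g_{a-1},g_{a+1}],g_b]=[[g_{a-1},g_{a+1}],\widehat{g}_b]=0$. For $m\geqslant 5$ and $b\neq a$ one has $|(a-1)-b|\cdot|(a+1)-b|\neq 1$, so this contradicts the clause of \eqref{longformula} demanding $[[z_{a-1},z_{a+1}],z_b]\neq 0$. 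Note that $\Theta$ itself never asserts $[[g_{a-1},g_{a+1}],g_a]=0$ (that triple is excluded from the conjunction), so deriving it from the Jacobi identity and the two adjacent commutations is exactly the missing idea in your outline.

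A structural remark for comparison: the paper does not prove injectivity and then count. Working with $m>n\geqslant 4$, it first shows by a minimality/pigeonhole argument on the smallest value $j_{i_0}$ that some repetition $j_i=j_k$ must occur (here $m>n$ is used), and then applies the centralizer argument above to contradict $\Phi(m)$. Your route --- repetitions are always impossible, hence the walk is injective with unit steps and closes up, hence $m=n$ --- is logically equivalent and in fact yields your slightly stronger claim that $M(X;C_n)\models\Phi(m)$ iff $n=m$ for all $m\geqslant 5$, $n\geqslant 3$; but in either formulation the decisive step is the one displayed above, and until it is supplied the argument has a genuine gap.
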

   \begin{proof}
     The converse is obvious. Indeed, if
     $m=n$ then
     $M(X;C_n)$ and
     $M(Y;C_m)$ are isomorphic.

     Let us prove the direct statement. We can assume without loss of generality that
     $m>n$. If
     $n=3$ then the statement is trivial. Indeed,
     $M(X,C_3)$ is an abelian Lie algebra. Therefore, the following formula holds in this
     algebra:
     $$\Psi= \forall g,h [g,h]=0.$$
     But
     $\Psi$ does no hold in
     $M(X, C_m)$ for
     $m\geqslant 4$. As a counterexample we can put
     $g=x_1$,
     $h=x_3$. Since
     $m>n\geqslant 3$, we have
     $m\geqslant 4$, consequently,
     $|1-3|>1$. Therefore,
     $[x_1,x_3]\neq 0$ in
     $M(X;C_n)$.

     Suppose that
     $m>n\geqslant 4$. Let
     $g_0,g_1,\dots,g_{m-1}$ make true
(\ref{longformula}) in
     $M(X;C_n)$. By Lemmas
\ref{linpartexists}--%
 \ref{linpartonecomp}, for all
     $i\in \mathbb{Z}_m$ we have
     $\widehat{g}_i=\alpha_i x_{j_i}$, where
     $\alpha_i\neq 0$.

     Easy to see that
     $|j_i-j_{i+1}|\leqslant 1$ for any
     $i\in \mathbb{Z}_m$. Indeed,
(\ref{2prod}) and homogeneity of identities and relations in a
     partially commutative metabelian Lie algebra imply
     $[\widehat{g}_i,\widehat{g}_j]=\alpha_i\alpha_{i+1}[x_{j_i},x_{j_{i+1}}]=0$, and
     the relation between
     $j_i$ and
     $j_{i+1}$ is immediate.

     Let us show that there exist
     $i,k\in \mathbb{Z}_m$ such that
     $j_i=j_k$ and therefore
     $x_{j_i}=x_{j_k}$. If there exists
     $i\in \mathbb{Z}_m$ such that
     $x_{j_i}=x_{j_{i+1}}$ then there is nothing to prove. Suppose
     that
     $|j_i-j_{i+1}|=1$ for all
     $i\in \mathbb{Z}_m$. Chose
     $i_0$, such that
     $j_{i_0}$ is a minimal among  all
     $j_i$'s. If
     $j_{i_0}\neq 0$, then
     $j_{i_0+1}=j_{i_0-1}=j_{i_0}+1$.

     Let
     $j_{i_0}=0$. If
     $j_{i_0+1}=j_{{i_0-1}}=1$ or
     $j_{i_0+1}=j_{{i_0-1}}=n-1$ then there is nothing to prove. Thus, we can suppose that
     $j_{i_0+1}\neq j_{{i_0-1}}$. Then we may assume without loss of generality that
     $j_{i_0+1}=1$,
     $j_{i_0-1}=n-1$. Since
     $j_i$ are distinct, we obtain
     \begin{equation} \label{chainineq}
       j_{i_0}<j_{i_0+1}\dots < j_{i_0+m-1}=j_{i_0-1}.
     \end{equation}
     Indeed,
     $i_0+m-1=i_0-1$ in
     $\mathbb{Z}_m$, consequently
     $j_{i+1}-j_{i}=1$ for all
     $i\in \mathbb{Z}_m$. But
     $j_{i_0-1}=m-1>n-1$. This contradiction shows that the set of
     inequalities
(\ref{chainineq}) is impossible. Therefore, for some
     $i,k\in \mathbb{Z}_m$ we have
     $j_{i}=j_{k}$. Then, by
(\ref{3prod}), we have
     $[[g_{i-1},g_{i+1}],\widehat{g}_{i}]=0$, consequently
     $[g_{i-1},g_{i+1}]\in \mathcal{C}(\widehat{g}_i)=\mathcal{C}(x_{j_i})$.
     This implies
     $[g_{i-1},g_{i+1}]\in \mathcal{C}(\widehat{g}_k)$, so
     $[[g_{i-1},g_{i+1}],\widehat{g}_{k}]=0$. But since
     $m\geqslant 5$ and
     $k\neq i$, at least one of the values
     $|k-(i-1)|$ and
     $|k-(i+1)|$ is not equal to
     $1$. This contradiction to
(\ref{longformula}) completes that proof.
   \end{proof}

 \section{To the question on algebras with the same universal theories} \label{sec4}
   Let
   $M(X;G)$ be a partially commutative metabelian Lie algebra and
   $G(X,E)$ its defining graph. For a vertex
   $x\in X$ let us put
   $x^{\perp}=\{y\,|\, d(x,y)\leqslant 1\}$, where
   $d(x,y)$ is the distance between
   $x$ and
   $y$. Namely, it is the least number
   $l$ such that there exists a path that connects
   $x$ and
   $y$ and goes through
   $l$ edges. Let us introduce a binary relation
   $\sim_{\perp}$ on
   $X$. By definition put
   $x \sim_{\perp} y$ iff
   $x_{\perp}=y_{\perp}$. Evidently, this is an equivalence relation.

   If
   $x \sim_{\perp} y$ and
   $x\neq y$ then
   $x\leftrightarrow y$. Therefore,
   $x$ and
   $y$ are in the same connected component of any subgraph of
   $G$ containing both these vertices.

   Let us prove a couple of statements about
   $\sim_{\perp}$.

   \begin{llll}\label{equivcomplgr}
     Let
     $\widetilde{X}^{\perp}_x$ be the equivalence class of
     $x$ with respect to
     $\sim_{\perp}$. Then the subgraph
     $G(\widetilde{X}^{\perp}_x)$ of
     $G$ is a complete graph. Moreover, for
     $y\in X\backslash \widetilde{X}^{\perp}_x$ we have
     $y \leftrightarrow x$ iff
     $y\leftrightarrow \widetilde{X}^{\perp}_x$.
   \end{llll}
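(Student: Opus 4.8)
The plan is simply to unwind the definitions; both assertions fall out of the single observation that $\sim_\perp$ identifies vertices having the same closed neighbourhood $x^\perp$, and that $x^\perp$ always contains its own centre $x$. No auxiliary machinery is needed, so I would keep the argument purely combinatorial.

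For the first assertion, I would take two distinct vertices $z,w\in\widetilde{X}^\perp_x$. By definition of $\sim_\perp$ we have $z^\perp=w^\perp$ (both equal $x^\perp$). Since trivially $w\in w^\perp$, it follows that $w\in z^\perp$, that is $d(z,w)\leqslant 1$; as $z\neq w$ this forces $d(z,w)=1$, i.e. $\{z,w\}\in E$. Since $z,w$ were arbitrary distinct vertices of the class, $G(\widetilde{X}^\perp_x)$ is complete. (This is essentially the remark made just before the lemma, that $x\sim_\perp y$ and $x\neq y$ imply $x\leftrightarrow y$.)

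For the second assertion I would treat the two implications separately. One direction is immediate: if $y\leftrightarrow\widetilde{X}^\perp_x$ then in particular $y\leftrightarrow x$, because $x\in\widetilde{X}^\perp_x$ and this class is nonempty. For the converse, suppose $y\leftrightarrow x$, so $y\in x^\perp$; pick any $z\in\widetilde{X}^\perp_x$, whence $z^\perp=x^\perp$ and therefore $y\in z^\perp$, i.e. $d(y,z)\leqslant 1$. Since $y\notin\widetilde{X}^\perp_x$ while $z\in\widetilde{X}^\perp_x$ we have $y\neq z$, so $d(y,z)=1$ and $y\leftrightarrow z$. As $z$ ranges over $\widetilde{X}^\perp_x$ this yields $y\leftrightarrow\widetilde{X}^\perp_x$.

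Since the whole proof is manipulation of closed neighbourhoods, there is no genuine obstacle. The only thing to be careful about is bookkeeping of distinctness: one must check in each step that the two vertices involved are actually different, so that the bound $d\leqslant 1$ upgrades to $d=1$, i.e. to an honest edge of $G$ rather than a vertex coinciding with itself.
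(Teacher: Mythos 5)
Your proof is correct and follows essentially the same route as the paper's: both arguments just unwind the definition of $\sim_{\perp}$ via closed neighbourhoods, using $w\in w^{\perp}=z^{\perp}$ for completeness of the class and $z^{\perp}=x^{\perp}$ for the transfer of adjacency to an outside vertex $y$. Your version is in fact slightly more careful than the paper's in tracking that the vertices involved are distinct before converting $d\leqslant 1$ into an edge.
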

   \begin{proof}
     Let
     $z\in \widetilde{X}^{\perp}_x$. Then by definition
     $x^{\perp}=z^{\perp}$.  Since clearly
     $x\in x^{\perp}$, we have
     $x\leftrightarrow z$.

     Since
     $\sim_{\perp}$ is an equivalence relation, we can repeat the arguments above for any
     vertex in
     $\widetilde{X}^{\perp}_x$ instead of
     $x$. So, each vertex in
     $X^{\sim_{\perp}}_x$ is adjacent to all other vertices of this set.

     Now, let
     $y\in X\backslash \widetilde{X}^{\perp}_x$. Then
     $y\leftrightarrow x$ iff
     $y \in x_{\perp}$. For any vertex in
     $z \in \widetilde{X}^{\perp}_x$ we have
     $z^{\perp}=x^{\perp}$. So, we obtain
     $y\leftrightarrow z$. Consequently, all vertices in
     $\widetilde{X}^{\perp}_x$ are adjacent to the same vertices in
     $X\backslash \widetilde{X}^{\perp}_x$. Thus,
     $y\leftrightarrow \widetilde{X}^{\perp}_x$.
   \end{proof}
   \begin{llll} \label{nonchangeconcomp}
     Let
     $G=\langle X;E\rangle$ be a graph and let
     $H=\langle Y;F\rangle$ be a subgraph of
     $G$ generated by a set
     $Y$. Let also
     $x,y\in Y$ be such that
     $x\sim_{\perp} y$ in
     $G$. Finally, let
     $H'=H(Y\backslash \{y\})$. Then the number of connected components in
     $H$ and
     $H'$ is same. Moreover, two vertices not
     equal to
     $y$ are in the same connected component of
     $H'$ iff they are in the same component of
     $H$
   \end{llll}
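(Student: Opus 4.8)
The plan is to exploit the hypothesis $x\sim_{\perp}y$, i.e.\ $x^{\perp}=y^{\perp}$, in order to show that inside $H$ the vertex $y$ is redundant for connectivity: any path through $y$ can be rerouted through $x$. First I would record two consequences of $x^{\perp}=y^{\perp}$, using also that $x\neq y$ (the only case that arises, and the only one in which the statement is true). Since $x\in x^{\perp}=y^{\perp}$ and $x\neq y$, we have $x\leftrightarrow y$, and as $x,y\in Y$ the pair $\{x,y\}$ is an edge of $H$. Secondly, if $z\in Y$ with $z\notin\{x,y\}$ is adjacent to $y$ in $H$, then $z\in y^{\perp}=x^{\perp}$, hence $z\leftrightarrow x$; since $z,x\in Y\setminus\{y\}$, the edge $\{z,x\}$ is present in $H'$ as well. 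In short, every neighbour of $y$ in $H$ other than $x$ is a neighbour of $x$ in $H'$.

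Next I would prove the ``Moreover'' clause, from which the statement about the number of components follows readily. One direction is immediate: if two vertices lie in the same component of $H'$, they lie in the same component of $H$, because $H'$ is a subgraph of $H$. For the converse, let $u,v\in Y\setminus\{y\}$ lie in the same component of $H$ and choose a simple path $P$ from $u$ to $v$ in $H$. If $P$ avoids $y$, it is already a path in $H'$. Otherwise $y$ is an interior vertex of $P$ (since $u,v\neq y$), with a predecessor $a$ and a successor $b$ on $P$; these are distinct from $y$, distinct from each other, and adjacent to $y$ in $H$. Deleting $y$ from $P$ together with its two incident edges and inserting $x$ with the edges $\{a,x\}$ and $\{x,b\}$ (both present in $H'$ by the observation above, one of them being dropped when $a=x$ or $b=x$) produces a walk from $u$ to $v$ inside $H'$. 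Hence $u$ and $v$ lie in the same component of $H'$.

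Finally, for the count of components, let $K$ be the component of $H$ that contains $y$. Since $x\leftrightarrow y$ in $H$, $K$ also contains $x$, so $V(K)\setminus\{y\}\neq\emptyset$; by the previous paragraph all of its vertices lie in one component of $H'$, and none of them is joined in $H'$ to any vertex outside $K$ (again because $H'$ is a subgraph of $H$). Every component of $H$ other than $K$ avoids $y$ and is unchanged in $H'$. Thus the components of $H'$ are exactly the components of $H$ distinct from $K$, together with the single component supported on $V(K)\setminus\{y\}$, and so $H$ and $H'$ have the same number of connected components.

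The one step needing care is the rerouting in the second paragraph: $x$ may coincide with a neighbour of $y$ on $P$ or occur elsewhere along $P$, which is precisely why the rerouted object must be taken to be a walk rather than a simple path. Everything else is routine bookkeeping about connected components, so I expect no real obstacle.
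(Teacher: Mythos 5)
Your proof is correct and follows essentially the same route as the paper's: since $x\sim_{\perp}y$ forces every neighbour of $y$ to be adjacent to $x$, any path through $y$ can be rerouted through $x$, so deleting $y$ changes neither the components (off $y$) nor their number. You are in fact somewhat more careful than the paper, handling explicitly the cases $a=x$ or $b=x$ (walk versus simple path) and spelling out the component count, but the underlying idea is identical.
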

   \begin{proof}
     Let
     $z_1$ and
     $z_2$ be in the same connected component of
     $H$. Then there exists a path connecting
     $z_1$ and
     $z_2$:
     \begin{equation}\label{path}
       (y_0=z_1, y_1,\dots,y_{s-1},y_{s}=z_2).
     \end{equation}
     If no vertex of this path is equal to
     $y$, then there is nothing to prove. Assume the converse,
     Namely, suppose that
     $y_l=y$ for some
     $l\in\{1,2,\dots,s-1\}$. Then
     $y_{l-1}\leftrightarrow y$ and
     $y_{l+1}\leftrightarrow y$. However,
     $y\sim_{\perp} x$. Therefore, by Lemma~%
\ref{equivcomplgr},
     $y_{l-1}\leftrightarrow x$ and
     $y_{l+1}\leftrightarrow x$. So, we can change
     $y$ by
     $x$ in the path
(\ref{path}). We get another path connecting
     $z_1$ and
     $z_2$. If this path still goes through
     $y$, let us repeat the procedure described above. Finally, we can obtain a path not going through
     $y$. Consequently,
     $z_1$ and
     $z_2$ are in the same connected component of
     $H'$.

     The converse is obvious because any path in
     $H'$ is also a path in
     $H$.

     In particular, the number of connected components
     $H$ and
     $H'$ is same. This concludes the proof.
   \end{proof}

   Let
   $G$ be a graph such that
   $\sim_{\perp}$ is not identical (diagonal) on it and let
   $M(X;G)$ be a partially commutative metabelian Lie algebra with the defining graph
   $G$. Suppose that the set
   $\widetilde{X}^{\perp}_{x_{n-1}}$ contains more then one
   element. Denote by
   $X'$ the set
   $X\backslash \{x_{n-1}\}$ and by
   $G'$ the graph
   $G(X')$. Without loss of generality it can be assumed that
   $x_{n-1}\sim_{\perp} x_{n-2}$. From this point on we consider an
   order on
   $X$ for that
   $x_{n-1}$ and
   $x_{n-2}$ are two least vertices and
   $x_{n-2}<x_{n-1}$. The order of other vertices does not matter so we can fix any order having the indicated property.
   Respectively, we consider the order on
   $X'$ that is obtained from the order on
   $X$ by removing
   $x_{n-1}$.

   For any
   $\lambda \in R\backslash\{0\}$ let us define the map
   $\varphi_{\lambda}: X \to M(X',G')$ as follows:
   \begin{equation} \label{homomorph}
     \varphi_{\lambda}(x_i)=
       \begin{cases}
         x_i, & \text{if } i\neq n-1; \\
         \lambda x_{n-2} & \text{if } i=n-1.
       \end{cases}
   \end{equation}
   This map can be extended up to a homomorphism from
   $M(X;G)$ to
   $M(X';G')$ uniquely.

   Indeed, each homomorphism keeps addition and multiplication. Therefore, we can represent the image of any
   element as a linear combination of the elements that are the products of generators
   of
   $M(X;G)$. Therefore, if such homomorphism exists then it is unique.

   Let us show that extension of
   $\varphi_{\lambda}$ to the entire
   $R$-algebra Lie
   $M(X;G)$ is really a homomorphism (let us denote this extension also by
   $\varphi_{\lambda}$).  It is suffices to check that the extension keeps all identities and
   relations of the metabelian partially commutative Lie algebra
   $M(X;G)$. All identities are hold in
   $M(X';G')$ because
   $M(X';G')$ is also a metabelian Lie algebra.  Let
   $[x_i,x_j]=0$ in
   $M(X;G)$. It means that
   $x_i\leftrightarrow x_j$. If
   $i,j\neq n-1$ then
   $\varphi_{\lambda}([x_i,x_j])=[\varphi_{\lambda}(x_i),\varphi_{\lambda}(x_j)]=[x_i,x_j]=0$
   in
   $M(X';G')$. If
   $[x_i,x_{n-1}]=0$ for
   $i\neq n-1$ then
   $x_i$ is adjacent to
   $x_{n-1}$ therefore
   $x_i$ is also adjacent to
   $x_{n-2}$ and so
   $[x_i,x_{n-2}]=0$. We obtain
   $\varphi_{\lambda}([x_i,x_{n-1}])=[\varphi_{\lambda}(x_i),\varphi_{\lambda}(x_{n-1})]=\lambda[x_i,x_{n-2}]=0$.

   By
   $\mdeg([w])$ we also denote the multidegree of a homogeneous element
   $[w]$ of the algebra
   $M(X';G')$. Since it is clear what algebra we are talking about there is no ambiguity.

   Let us set
   $\lambda$ and consider
   $\varphi_{\lambda}$. Suppose that
   $[u]$ and
   $[v]$ are non-zero Lie monomials of
   $M(X;G)$ such that
   $\varphi_{\lambda}([u])$ and
   $\varphi_{\lambda}([v])$ are not equal to zero in
   $M(X';G')$. Clearly,
   $\mdeg(\varphi([u]))=\mdeg(\varphi([v]))$ iff
   $\widetilde{\mdeg}([u])=\widetilde{\mdeg}([v])$. Indeed, easy to see that
   $\mdeg(\varphi([u]))=\widetilde{\mdeg}([u])$ and
   $\mdeg(\varphi([v]))=\widetilde{\mdeg}([v])$.

   Consider an element
   $g\in M(X;G) \backslash \{0\}$.  There exists the decomposition
   \begin{equation}\label{maindecomp}
     g=\sum_{\widetilde{\delta},x_i} g_{\widetilde{\delta},x_i},
   \end{equation}
   where
   $g_{\widetilde{\delta},x_i}$ is a non-empty linear combination of basis monomials that
   appear with non-zero coefficients
   in the decomposition of
   $g$, start with
   $x_i$, and have the glued multidegree
   $\widetilde{\delta}$. It is easy to see how to obtain this decomposition. First of all, let us represent
   $g$ as a linear combination of basis monomials. Then, for each
   $g_{\widetilde{\delta},x_i}$ we need to choose the summands starting with the required generator and having the required glued multidegree.
   Since basis monomials are linearly independent, all
   $g_{\widetilde{\delta},x_i}$ that consist of at least one basis monomial with a non-zero
   coefficient are not equal to zero in
   $M(X;G)$.

   Let
   $\widetilde{\delta}_0=(\varepsilon_0,\varepsilon_1,\dots, \varepsilon_{n-2})$ be a glued multidegree.
   If representation
(\ref{maindecomp}) of
   $g$ contains
   $g_{\widetilde{\delta}_0,x_i}$ then we can write
   \begin{equation}\label{lincomb}
     g_{\widetilde{\delta}_0,x_i}=\sum_{j=0}^{\varepsilon_{n-2}} \alpha_j [u_{i,j}],
   \end{equation}
   where
   $[u_{i,j}]$ is the monomial of
   $M(X;G)$ defined as follows. Its multidegree is
   $$\mdeg([u_{i,j}])=(\varepsilon_0,\varepsilon_1,\dots,\varepsilon_{n-3},\varepsilon_{n-2}-j,j),$$
   it starts with
   $x_i$, and it is a basis monomial in the case of
   $\alpha_j \neq 0$. Let us also notice that by definition of
   $g_{\widetilde{\delta}_0,x_i}$ there exists
   $j$ such that
   $\alpha_j\neq 0$.

   \begin{llll} \label{hombas}
     If
     $[u_{i,j}]$
     is a basis monomial for some
     $j$, then
     $[u_{i,0}]$ is also a basis monomial.
   \end{llll}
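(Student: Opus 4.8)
The plan is to exhibit, directly, a basis monomial of multidegree $\overline{\delta}_0:=(\varepsilon_0,\dots,\varepsilon_{n-3},\varepsilon_{n-2},0)$ with first letter $x_i$; since such a monomial, once it exists, is uniquely determined by its multidegree and first letter, it must then coincide with $[u_{i,0}]$. The case $\varepsilon_{n-2}=0$ is trivial because then $[u_{i,j}]=[u_{i,0}]$, and so is the case in which $[u_{i,j}]$ is a single generator (then $\varepsilon_{n-2}\leqslant1$ and the monomial of multidegree $\overline{\delta}_0$ is again a generator). So assume $\varepsilon_{n-2}>0$ and $[u_{i,j}]=[\dots[x_i,x_{i_2}],\dots,x_{i_k}]$ with $k\geqslant2$. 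First I would record that $x_i\notin\{x_{n-2},x_{n-1}\}$: a basis monomial of length at least $2$ cannot begin with $x_{n-2}$, the least element of $X$ (there is no smaller letter to play the role of $x_{i_2}$), nor with $x_{n-1}$ (its only smaller letter is $x_{n-2}$, but $x_{n-1}\leftrightarrow x_{n-2}$, so the first and third conditions in the definition of a basis monomial would clash). Hence $x_i>x_{n-1}>x_{n-2}$.

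Write $\overline{\delta}_j$ for the multidegree of $[u_{i,j}]$ and let $S_j$, $S_0$ be the supports of $\overline{\delta}_j$, $\overline{\delta}_0$, so that $G_{[u_{i,j}]}=G(S_j)$ and $G_{[u_{i,0}]}=G(S_0)$; here $x_{n-2}\in S_0$, $x_{n-1}\notin S_0$, whereas $x_{n-2}\in S_j$ iff $j<\varepsilon_{n-2}$ and $x_{n-1}\in S_j$ iff $j>0$. The key step is to pin down the connected component of $x_i$. As $[u_{i,j}]$ is a basis monomial beginning with $x_i$, its second letter $x_{i_2}$ is the least letter other than $x_i$ occurring in it, namely $x_{n-2}$ if $x_{n-2}\in S_j$ and $x_{n-1}$ otherwise; by the third basis condition, the component of $G(S_j)$ containing $x_i$ — call its vertex set $C$ — does not contain $x_{i_2}$. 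I would then argue that $C$ avoids both $x_{n-2}$ and $x_{n-1}$: whichever of the two equals $x_{i_2}$ is excluded outright, and the other, being adjacent to $x_{i_2}$ (recall $x_{n-1}\leftrightarrow x_{n-2}$), lies in the same component as $x_{i_2}$ whenever it belongs to $S_j$. Thus $C\subseteq\{x_0,\dots,x_{n-3}\}$, and moreover no vertex of $C$ is adjacent in $G$ to $x_{n-1}$ or to $x_{n-2}$: adjacency to whichever of the two lies in $S_j$ is impossible (it sits in another component of $G(S_j)$), and adjacency to the other then fails because $x_{n-1}\sim_{\perp}x_{n-2}$ means $x_{n-1}$ and $x_{n-2}$ have the same neighbours among the vertices distinct from them (cf.\ Lemma~\ref{equivcomplgr}).

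Next I would transfer this picture to $G(S_0)$. The graphs $G(S_j)$ and $G(S_0)$ induce the same subgraph on $\{x_\ell:\ell\leqslant n-3,\ \varepsilon_\ell>0\}$, a set that contains $C$, so $C$ is still connected in $G(S_0)$; and by the previous paragraph no vertex of $C$ is adjacent in $G$ to any vertex of $S_0\setminus C$ — the only candidate lying outside $S_j$ is $x_{n-2}$, which has just been excluded. Hence $C$ is exactly the vertex set of the component of $x_i$ in $G(S_0)$ as well. (The same bookkeeping can be extracted from Lemma~\ref{nonchangeconcomp}, applied to the deletion of $x_{n-1}$ from $G(S_j\cup\{x_{n-2}\})$, preceded — when $x_{n-2}\notin S_j$ — by its converse, the addition of the $\sim_{\perp}$-equivalent vertex $x_{n-2}$.) It remains to verify the four basis conditions for the monomial $[v]=[\dots[x_i,x_{n-2}],\dots]$ of multidegree $\overline{\delta}_0$ whose letters after $x_i$ are arranged in nondecreasing order: the first two hold automatically, because $x_{n-2}$ is the least element of $X$ and $x_{n-2}<x_i$; the third holds because $x_{n-2}\notin C$; and the fourth holds because the component of $x_i$ in $G(S_0)$ is $C$ and $x_i=\max C$ by the fourth condition for $[u_{i,j}]$. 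So $[v]$ is a basis monomial, whence $[v]=[u_{i,0}]$.

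I expect the transfer step to be the main obstacle, since passing from $[u_{i,j}]$ to $[u_{i,0}]$ is not a plain vertex deletion: when $[u_{i,j}]$ contains only copies of $x_{n-1}$ and no $x_{n-2}$, the substitution also \emph{creates} the vertex $x_{n-2}$ in the support, so one cannot invoke Lemma~\ref{nonchangeconcomp} verbatim. The whole argument therefore leans on the hypothesis $x_{n-1}\sim_{\perp}x_{n-2}$ — that $x_{n-1}$ and $x_{n-2}$ have identical closed neighbourhoods — to see that collapsing $x_{n-1}$ onto $x_{n-2}$ leaves both the component of $x_i$ and its greatest element undisturbed.
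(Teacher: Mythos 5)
Your argument is correct, and it rests on the same underlying fact as the paper's proof --- that $x_{n-2}\sim_{\perp}x_{n-1}$ forces the connected components relevant to the basis conditions to be unaffected by collapsing $x_{n-1}$ onto $x_{n-2}$ --- but you execute it differently. The paper splits into the cases $j=0$, $j=\varepsilon_{n-2}$ and $0<j<\varepsilon_{n-2}$: for $j=\varepsilon_{n-2}$ it invokes the graph isomorphism $G(X_{[u_{i,\varepsilon_{n-2}}]})\cong G(X_{[u_{i,0}]})$ sending $x_{n-1}$ to $x_{n-2}$, and for the intermediate $j$ it invokes Lemma~\ref{nonchangeconcomp} (deletion of a $\sim_{\perp}$-equivalent vertex). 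You instead give one uniform argument: you pin down the component $C$ of $x_i$ in $G_{[u_{i,j}]}$, show via the second-letter condition and the adjacency $x_{n-1}\leftrightarrow x_{n-2}$ that $C$ avoids both special vertices, use the equality of closed neighbourhoods (Lemma~\ref{equivcomplgr}) to see that no vertex of $C$ borders either of them, conclude that $C$ is again the component of $x_i$ in $G_{[u_{i,0}]}$, and then check the four defining conditions of a basis monomial directly. What this buys: you avoid the case analysis; you correctly flag and handle the point that passing from $[u_{i,\varepsilon_{n-2}}]$ to $[u_{i,0}]$ is not a vertex deletion but also creates $x_{n-2}$ in the support, so Lemma~\ref{nonchangeconcomp} cannot be quoted verbatim there (the paper handles that case by the isomorphism instead); and you make explicit the preliminary observation $x_i\notin\{x_{n-2},x_{n-1}\}$, which the paper's phrase ``$x_{n-1}$ is on the second place in $[u_{i,j}]$'' uses tacitly. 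The paper's route is shorter because the isomorphism argument in the extreme case and Lemma~\ref{nonchangeconcomp} in the middle case each dispose of all four conditions at once, whereas you verify them by hand; both proofs are sound.
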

   \begin{proof}
     If
     $j=0$ then there is nothing to prove.

     Let
     $j=\varepsilon_{n-2}$. Since
     $x_{n-2}\sim_{\perp} x_{n-1}$, the graphs
     $G(X_{[u_{i,\varepsilon_{n-2}}]})$ and
     $G(X_{[u_{i,0}]})$ are clearly isomorphic and the map
     taking
     $x_{n-1}$ to
     $x_{n-2}$, and
     $x_i$ to
     $x_i$ for all other
     $i$ is an isomorphism. Moreover,
     $x_{n-1}$ is the least generator, therefore
     $x_{n-1}$ is on the second place in
     $[u_{i,j}]$. Thus, the described isomorphism of graphs take
     $x_{n-1}$ to
     $x_{n-2}$ and
     $x_{n-2}$ is the least generator in
     $X_{[u_{i,0}]}$. Therefore
     $x_{n-2}$ is on the second place in
     $\varphi_{\lambda}([u_{i,j}])$. So, if
     $[u_{i,\varepsilon_{n-2}}]$ is a basis monomial then
     $x_{i}$ and
     $x_{n-1}$ are in different connected components of
     $G(X_{[u_{i,\varepsilon_{n-2}}]})$. Consequently, the images of these vertices are also in different connected components of
     $G(X_{[u_{i,0}]})$.

     Finally, suppose that
     $j\neq 0,\varepsilon_{n-2}$. By Lemma
\ref{nonchangeconcomp}, it is easy to see that
     $x_{i}$ and
     $x_{n-1}$ are in different connected components, i.e.
     $x_i$ and
     $x_{n-2}$ are also in different connected components.

     By Lemma~%
\ref{nonchangeconcomp}, for all graphs
     $G(X_{[u_{i,j}]})$ the connected components not containing
     $x_{n-2}$ and
     $x_{n-1}$ are same. Therefore if
     $x_i$ is the largest vertex in its connected component of
     $G(X_{[u_{i,j}]})$ it is also the largest vertex in the corresponding component of
     $G(X_{[u_{i,0}]})$.
   \end{proof}

   \begin{ccc}\label{corollary}
     \begin{enumerate}
       \item
         $\varphi_{\lambda}(g_{\widetilde{\delta_0},x_i})=0$ iff
         $\sum_{j=0}^{\varepsilon_{n-2}} \alpha_j \lambda^{j}=0$.
       \item
         If
         $\varphi_{\lambda}(g_{\widetilde{\delta_0},x_i})\neq 0$ then this
         is a multiple of some basis monomial.
     \end{enumerate}
    \end{ccc}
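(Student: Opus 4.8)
\emph{Proof proposal.}
The plan is to evaluate $\varphi_{\lambda}$ on each summand of the expression~(\ref{lincomb}) for $g_{\widetilde{\delta_0},x_i}$ and to show that, after extracting a power of $\lambda$, every monomial that survives is \emph{the same} basis monomial $[u_{i,0}]$ of $M(X';G')$; both assertions then follow from the linear independence of basis monomials. (First note that $i\neq n-1$, since no basis monomial can begin with the least generator $x_{n-1}$, by property~(1) of basis monomials; so $\varphi_{\lambda}$ fixes the leading factor of every $[u_{i,j}]$.)

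First I would record the effect of $\varphi_{\lambda}$ on a single monomial. Since $\varphi_{\lambda}$ is a homomorphism fixing every generator except $x_{n-1}$, which it sends to $\lambda x_{n-2}$, and the Lie bracket is multilinear, we get $\varphi_{\lambda}([u_{i,j}])=\lambda^{j}[\bar u_{i,j}]$, where $[\bar u_{i,j}]$ denotes the monomial of $M(X';G')$ obtained from $[u_{i,j}]$ by replacing each of its $j$ occurrences of $x_{n-1}$ (there are exactly $\mdeg_{n-1}([u_{i,j}])=j$ of them) by $x_{n-2}$. No cancellation occurs at this stage: a single monomial goes to a scalar times a single monomial.

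The main step is to check that $[\bar u_{i,j}]=[u_{i,0}]$ in $M(X';G')$ whenever $\alpha_{j}\neq 0$, so that $[u_{i,j}]$, and hence by Lemma~\ref{hombas} also $[u_{i,0}]$, is a basis monomial. If $\varepsilon_{n-2}=0$, or if $x_{i}=x_{n-2}$, then only the term $j=0$ can occur with $\alpha_{j}\neq 0$ --- in the second case because $x_{n-2}\sim_{\perp}x_{n-1}$ forces $x_{n-2}$ and $x_{n-1}$ into one connected component, contradicting property~(3) of a basis monomial --- and there is nothing to prove. Otherwise $\varepsilon_{n-2}\geqslant 1$ and $x_{i}\neq x_{n-2}$. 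Then $[\bar u_{i,j}]$ and $[u_{i,0}]$ begin with the same generator $x_{i}$, and the tails of both (the factors after the first) run over the same multiset of generators, namely the multiset prescribed by $\widetilde{\delta_0}$ with one copy of $x_{i}$ deleted: passing from $[u_{i,j}]$ to $[\bar u_{i,j}]$ simply converts its $j$ copies of $x_{n-1}$ into copies of $x_{n-2}$, restoring the total $x_{n-2}$-count to $\varepsilon_{n-2}$. Moreover the second factor of both monomials is $x_{n-2}$: for $[u_{i,0}]$ because $x_{n-2}$ is the least generator of $X'$ and occurs in its tail, and for $[\bar u_{i,j}]$ (when $j\geqslant 1$) because the second factor of the basis monomial $[u_{i,j}]$ is the least generator of $X$, which is $x_{n-1}$, turned into $x_{n-2}$ by $\varphi_{\lambda}$ (for $j=0$ the two monomials coincide). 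Hence $[\bar u_{i,j}]$ and $[u_{i,0}]$ differ only by a permutation of the factors in positions $3,\dots,k$, so by~(\ref{permut}) they are equal in $M(X';G')$.

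Combining the two steps, $\varphi_{\lambda}(g_{\widetilde{\delta_0},x_i})=\sum_{j=0}^{\varepsilon_{n-2}}\alpha_{j}\lambda^{j}[\bar u_{i,j}]=\bigl(\sum_{j=0}^{\varepsilon_{n-2}}\alpha_{j}\lambda^{j}\bigr)[u_{i,0}]$, where $[u_{i,0}]$ is a nonzero basis monomial of $M(X';G')$ (some $\alpha_{j}$ is nonzero by the definition of $g_{\widetilde{\delta_0},x_i}$, so Lemma~\ref{hombas} applies). Since $M(X';G')$ is a free $R$-module on its basis monomials, this element vanishes precisely when $\sum_{j}\alpha_{j}\lambda^{j}=0$, which is claim~(1); and when it does not vanish it is the element $\sum_{j}\alpha_{j}\lambda^{j}\in R\setminus\{0\}$ times the basis monomial $[u_{i,0}]$, which is claim~(2). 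The only delicate point is the identification $[\bar u_{i,j}]=[u_{i,0}]$ --- concretely, checking that the second factors line up --- and that is exactly where the hypothesis $x_{n-1}\sim_{\perp}x_{n-2}$, through Lemma~\ref{hombas}, is used to guarantee that $[u_{i,0}]$ really is a basis monomial.
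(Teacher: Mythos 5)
Your proposal is correct in substance and follows essentially the same route as the paper: the paper's entire proof consists of stating the identity $\varphi_{\lambda}([u_{i,j}])=\lambda^{j}[u_{i,0}]$ (formula~(\ref{acthomphi})) and then invoking Lemma~\ref{hombas}, while you additionally verify that identity (replace the $j$ occurrences of $x_{n-1}$ by $x_{n-2}$, match the first two letters, and identify the result with $[u_{i,0}]$ via~(\ref{permut})), which the paper leaves implicit. One slip to fix: the paper's chosen order has $x_{n-2}<x_{n-1}$ as the two least generators, so the least element of $X$ is $x_{n-2}$, not $x_{n-1}$. Hence your two side remarks have their justifications interchanged: a basis monomial of degree at least $2$ cannot begin with $x_{n-2}$ because that letter is least (property~(1)), whereas $x_i=x_{n-1}$ is excluded because the second letter would then have to be $x_{n-2}$ and $x_{n-1}\leftrightarrow x_{n-2}$ contradicts property~(3); similarly, for $0<j<\varepsilon_{n-2}$ the second letter of $[u_{i,j}]$ is $x_{n-2}$ (fixed by $\varphi_{\lambda}$), and it is $x_{n-1}$ only when $j=\varepsilon_{n-2}$. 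After swapping these justifications all the conclusions you actually use (the leading letter is fixed by $\varphi_{\lambda}$, and the second letter of $\varphi_{\lambda}([u_{i,j}])$ is $x_{n-2}$) remain valid, so the argument goes through as the paper intends.
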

    \begin{proof}
      Note that
      \begin{equation}\label{acthomphi}
        \varphi_{\lambda}([u_{i,j}])=\lambda^{j}[u_{i,0}].
      \end{equation}
      We may write this because
      $X_{[u_{i,0}]}\subseteq X'$ and so
      $[u_{i,0}]$ can be considered as an element of
      $M(X';G')$.

      Now, one can easily obtain both assertions from Lemma~%
\ref{hombas}.
   \end{proof}

   \begin{llll}\label{subhomogeneous}
     Let
     $g$ be a non-zero element of
     $M(X;G)$ and let
     $\varphi_{\lambda}$ a homomorphism defined by
(\ref{homomorph}) for some
     $\lambda \in R\backslash\{0\}$. Then
     $\varphi_{\lambda}(g)=0$ in
     $M(X';G')$ iff
     $\varphi_{\lambda}(g_{\widetilde{\delta},x_i})=0$ for all components
     $g_{\widetilde{\delta},x_i}$ of the decomposition
(\ref{maindecomp}).
   \end{llll}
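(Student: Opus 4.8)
The plan is to exploit the fact that $\varphi_\lambda$ only ``glues'' the last two coordinates of the multidegree, so it sends a basis monomial of glued multidegree $\widetilde\delta$ to an element of multidegree $\widetilde\delta$ in $M(X';G')$. First I would observe the easy direction: if every component $g_{\widetilde\delta,x_i}$ of the decomposition~(\ref{maindecomp}) is killed by $\varphi_\lambda$, then since $g=\sum_{\widetilde\delta,x_i}g_{\widetilde\delta,x_i}$ and $\varphi_\lambda$ is linear, we get $\varphi_\lambda(g)=0$. This requires nothing beyond linearity of the homomorphism.

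For the converse, suppose $\varphi_\lambda(g)=0$ in $M(X';G')$. By Corollary~\ref{corollary}(2), for each pair $(\widetilde\delta,x_i)$ appearing in~(\ref{maindecomp}) the image $\varphi_\lambda(g_{\widetilde\delta,x_i})$ is either zero or a non-zero scalar multiple of a single basis monomial of $M(X';G')$; call that basis monomial $[u_{i,0}]$ in the notation preceding Lemma~\ref{hombas}, noting that by Lemma~\ref{hombas} it genuinely is a basis monomial. The key point is then to check that distinct components map to \emph{linearly independent} pieces, so that no cancellation between components is possible. A component $g_{\widetilde\delta,x_i}$ is determined by its starting generator $x_i$ and its glued multidegree $\widetilde\delta$; its image, when non-zero, is a multiple of $[u_{i,0}]$, which starts with $x_i$ and has multidegree $\mdeg([u_{i,0}])$ whose glued multidegree is exactly $\widetilde\delta$ (this is the computation $\mdeg(\varphi_\lambda([u]))=\widetilde{\mdeg}([u])$ noted before~(\ref{maindecomp})). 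Hence two components with different $(\widetilde\delta,x_i)$ produce basis monomials that differ either in their first generator or in their multidegree, so these basis monomials are distinct, hence linearly independent in $M(X';G')$.

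Therefore $\varphi_\lambda(g)=\sum_{\widetilde\delta,x_i}\varphi_\lambda(g_{\widetilde\delta,x_i})$ is a linear combination of pairwise distinct basis monomials of $M(X';G')$ (one monomial for each component whose image is non-zero), and this combination equals $0$. Since basis monomials are linearly independent, every coefficient vanishes, i.e. $\varphi_\lambda(g_{\widetilde\delta,x_i})=0$ for all $(\widetilde\delta,x_i)$, as required. The only mild subtlety — and the step I would be most careful about — is confirming that different components cannot collide onto the \emph{same} basis monomial of $M(X';G')$; this is exactly why the bookkeeping by both the starting generator $x_i$ \emph{and} the glued multidegree $\widetilde\delta$ is needed, and it is precisely this invariance of $(x_i,\widetilde\delta)$ under $\varphi_\lambda$ that makes the argument go through.
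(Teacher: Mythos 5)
Your proof is correct and follows essentially the same route as the paper: both reduce the statement, via Corollary~\ref{corollary}, to the fact that the nonzero images $\varphi_{\lambda}(g_{\widetilde{\delta},x_i})$ are multiples of pairwise distinct basis monomials $[u_{i,0}]$ of $M(X';G')$, which are linearly independent. The only cosmetic difference is that the paper first isolates a single glued multidegree by invoking homogeneity of the relations and then separates components by their first letter, whereas you separate all components at once by the pair $(\widetilde{\delta},x_i)$; these amount to the same thing.
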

   \begin{proof}
     Let
     $g$ be a non-zero element of
     $M(X;G)$ such that
     $\varphi_{\lambda}(g)=0$. By
(\ref{maindecomp}), we obtain
     $\sum_{\widetilde{\delta},x_i} \varphi_{\lambda}(g_{\widetilde{\delta},x_i})=\varphi_{\lambda}(g)=0$.
     Suppose that
     $\varphi_{\lambda}(g_{\widetilde{\delta_0},x_{i_0}})\neq 0$
     for some glued multidegree
     $\widetilde{\delta}_0=(\varepsilon_0,\dots,\varepsilon_{n-2})$ and some
     generator
     $x_{i_0}$. Show that the following expression is not equal to
     zero in
     $M(X';G')$:
     \begin{equation}\label{homogen}
       \varphi_{\lambda} \left(\sum_{i} g_{\widetilde{\delta}_0,x_i}\right)=\sum_{i}  \varphi_{\lambda}(g_{\widetilde{\delta}_0,x_i}).
     \end{equation}

     Indeed, by
(\ref{homogen}) and Corollary~%
 \ref{corollary} we obtain
     $\sum_{i}\varphi_{\lambda} (g_{\widetilde{\delta}_0,x_i})=\sum_{i} \beta_i [u_{i,0}]$, where
     $\beta_i$ are some elements in
     $R$. Since
     $\beta_i[u_{i,0}]=\varphi_{\lambda}(g_{\widetilde{\delta}_0,x_i})$,
     some
     $\beta_i$ are not equal to
     $0$. In particular,
     $\beta_{i_0}\neq 0$. Since the first letters of the monomials
     $[u_{i,0}]$  are different they are different basis monomials. Therefore,
     $\sum_{i}\varphi_{\lambda} (g_{\widetilde{\delta}_0,x_i})\neq 0$ in
     $M(X';G')$. But this is impossible. Indeed, by homogeneity of identities and relations
     a partially commutative metabelian Lie algebra, if a Lie polynomial is equal to zero in
     $M(X';G')$ then all summands of the decomposition of this polynomial as the sum
     of homogeneous elements should also be equal to zero in
     $M(X';G')$. We are left to notice that
     $\varphi_{\lambda} \left(\sum_{i}g_{\widetilde{\delta}_0,x_i}\right)$ is just such
     summand.
   \end{proof}

   \begin{llll}\label{tononzero}
     Let
     $g \in M(X;G)\backslash \{0\}$. Then there exists
     $\lambda_0 \in \mathbb{Z}^+$ such that
     $\varphi_{\lambda}(g)\neq 0$ for any
     $\lambda\geqslant \lambda_0$.
   \end{llll}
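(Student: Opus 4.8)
The plan is to use Lemma~\ref{subhomogeneous} to reduce the nonvanishing of $\varphi_{\lambda}(g)$ to the simultaneous nonvanishing of $\varphi_{\lambda}$ on each of the finitely many components $g_{\widetilde{\delta},x_i}$ occurring in the decomposition~(\ref{maindecomp}) of $g$, and then to use Corollary~\ref{corollary} to recast each of these conditions as the requirement that $\lambda$ avoid the finite set of roots of a certain nonzero one-variable polynomial over $R$.

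First I would fix the decomposition $g=\sum_{\widetilde{\delta},x_i} g_{\widetilde{\delta},x_i}$ given by~(\ref{maindecomp}). Since $g$ is a finite linear combination of basis monomials, only finitely many components are present; denote them $g_{\widetilde{\delta}^{(1)},x_{i_1}},\dots,g_{\widetilde{\delta}^{(N)},x_{i_N}}$. For the $k$th component write it in the form~(\ref{lincomb}), i.e. $g_{\widetilde{\delta}^{(k)},x_{i_k}}=\sum_{j=0}^{\varepsilon^{(k)}}\alpha^{(k)}_j[u_{i_k,j}]$, where $\varepsilon^{(k)}$ is the last coordinate of $\widetilde{\delta}^{(k)}$. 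By the very definition of a component of~(\ref{maindecomp}), at least one coefficient $\alpha^{(k)}_j$ is nonzero, so the polynomial $p_k(t)=\sum_{j=0}^{\varepsilon^{(k)}}\alpha^{(k)}_j t^j\in R[t]$ is a nonzero polynomial.

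Next, by the first assertion of Corollary~\ref{corollary}, for every $\lambda\in R\backslash\{0\}$ we have $\varphi_{\lambda}(g_{\widetilde{\delta}^{(k)},x_{i_k}})=0$ in $M(X';G')$ if and only if $p_k(\lambda)=0$ in $R$. Since $R$ is an integral domain, the nonzero polynomial $p_k$ has at most $\deg p_k\leqslant\varepsilon^{(k)}$ roots in $R$; in particular only finitely many positive integers are roots of $p_k$ (recall that $\mathbb{Z}$ is a subring of $R$, so distinct positive integers stay distinct and nonzero in $R$). Let $S\subseteq\mathbb{Z}^+$ be the finite set of positive integers that are roots of at least one of $p_1,\dots,p_N$, and set $\lambda_0=\max S+1$ if $S\neq\emptyset$ and $\lambda_0=1$ otherwise; then $\lambda_0\in\mathbb{Z}^+$. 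For every integer $\lambda\geqslant\lambda_0$ we have $\lambda\notin S$, hence $p_k(\lambda)\neq0$ and therefore $\varphi_{\lambda}(g_{\widetilde{\delta}^{(k)},x_{i_k}})\neq0$ for every $k$; by Lemma~\ref{subhomogeneous} this yields $\varphi_{\lambda}(g)\neq0$, which is the desired conclusion.

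I do not expect a genuine obstacle here; the points that must be handled with care are that the sum in~(\ref{maindecomp}) is finite (true because $g$ is a finite combination of basis monomials), that each $p_k$ is genuinely a nonzero polynomial (this is exactly the nonemptiness of the components of~(\ref{maindecomp})), and that passing to the subring $\mathbb{Z}$ does not identify any positive integers, so that ``finitely many roots in $R$'' translates into ``finitely many forbidden values of $\lambda$''.
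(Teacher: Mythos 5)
Your proof is correct and follows essentially the same route as the paper: reduce via Lemma~\ref{subhomogeneous} to the components $g_{\widetilde{\delta},x_i}$, identify the vanishing of each image with the vanishing of the polynomial $\sum_j\alpha_j\lambda^j$ (the paper does this via~(\ref{acthomphi}), you via Corollary~\ref{corollary}, which amounts to the same thing), and use that a nonzero polynomial over an integral domain has only finitely many roots. Your version is slightly more explicit than the paper's in taking the maximum of the thresholds over all components, but the argument is identical in substance.
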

   \begin{proof}
     It follows from Lemma~%
\ref{subhomogeneous} that
     $\varphi_{\lambda}([g])=0$ in
     $M(X';G')$ iff
     $\varphi_{\lambda}(g_{\widetilde{\delta},x_i})=0$ for all
     $g_{\widetilde{\delta},x_i}$ appearing in the decomposition
(\ref{maindecomp}). Therefore, it suffices to prove the assertion
     of the lemma in the case of
     $g=g_{\widetilde{\delta},x_i}$ that is not equal to zero in
     $M(X;G)$, where
     $x_i$ is a generator and
     $\widetilde{\delta}=(\varepsilon_{0},\varepsilon_{1},\dots, \varepsilon_{n-2})$ is a glued multidegree.

     Let
     $g=\sum_{j=0}^{\varepsilon_{n-2}}\alpha_j [u_{i,j}]$ where
     $u_{i,j}$ is a basis monomial such that it starts with
     $x_i$ and
     $\mdeg([u_{i,j}])=(\varepsilon_{0},\varepsilon_{1},\dots, \varepsilon_{n-3},\varepsilon_{n-2}-j,j)$.
     By~%
(\ref{acthomphi}), we have
     $\varphi(g)=\left(\sum_{j=0}^{\varepsilon_{n-2}} \alpha_j\lambda^j \right)[u_{i,0}]$.
     Consider the polynomial
     $p(\lambda)=\sum_{j=0}^{\varepsilon_{n-2}} \alpha_j\lambda^j$. Since
     $R$ is an integral domain, this polynomial has at most
     $\varepsilon_{n-2}$ positive integer roots. Thus,
     $\lambda_0$ can be chosen by one greater than the largest positive integer root of
     $p(\lambda)$. If
     $p(\lambda)$ has no positive integer root, we can take, for
     example,
     $\lambda_{0}=1$).
   \end{proof}

   \begin{ttt}\label{equivequiv}
     Let
     $X$ be a finite set,
     $G$ a graph. Suppose that there exists
     $x\in X$ such that the
     $\sim_{\perp}$-equivalence class
     $\widetilde{X}^{\perp}_x$ has more than one element. Finally, let
     $X'=X\backslash \{x\}$ and
     $G'=G(X')$. Then the algebras
     $M(X;G)$ and
     $M(X';G')$ are universally equivalent.
   \end{ttt}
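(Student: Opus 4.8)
The plan is to show that $M(X;G)$ and $M(X';G')$ have the same existential theory; since existential equivalence coincides with universal equivalence (see the remark above), this yields the theorem. After relabelling I may assume $x=x_{n-1}$, and since $\widetilde X^{\perp}_{x_{n-1}}$ has more than one element I fix a partner $x_{n-2}$ in it, putting myself in exactly the situation set up above: $x_{n-1},x_{n-2}$ are the two least vertices, $x_{n-2}<x_{n-1}$, and for every $\lambda\in R\setminus\{0\}$ there is the homomorphism $\varphi_{\lambda}\colon M(X;G)\to M(X';G')$ of (\ref{homomorph}).

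First I would check the inclusion $\mathrm{Th}_{\exists}(M(X';G'))\subseteq\mathrm{Th}_{\exists}(M(X;G))$. The assignment $x_i\mapsto x_i$ for $x_i\in X'$ extends to an $R$-algebra homomorphism $\iota\colon M(X';G')\to M(X;G)$, because the defining relations of $M(X';G')$ form a subset of those of $M(X;G)$; and $\iota$ is injective, since by the basis description recalled in Section~\ref{sec2} a Lie monomial in which $x_{n-1}$ does not occur is a basis monomial of $M(X';G')$ precisely when it is a basis monomial of $M(X;G)$ (the four defining conditions involve only $G(X_{[u]})$ and the order restricted to $X_{[u]}\subseteq X'$), so $\iota$ maps a basis of $M(X';G')$ bijectively onto part of a basis of $M(X;G)$. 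Thus $M(X';G')$ is isomorphic to a substructure of $M(X;G)$, and an existential sentence true in a substructure is true in the whole algebra.

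The substantial inclusion is $\mathrm{Th}_{\exists}(M(X;G))\subseteq\mathrm{Th}_{\exists}(M(X';G'))$, and for it I would use the maps $\varphi_{\lambda}$ together with Lemma~\ref{tononzero}. Let $\sigma=\exists w_{1}\dots w_{m}\,\Phi(w_{1},\dots,w_{m})$ hold in $M(X;G)$, witnessed by $a_{1},\dots,a_{m}$. Putting the quantifier-free part into disjunctive normal form, the truth of $\Phi(a_{1},\dots,a_{m})$ reduces to finitely many term equalities $t_{s}(\bar a)=t'_{s}(\bar a)$ and finitely many term inequalities $t_{r}(\bar a)\neq t'_{r}(\bar a)$ holding in $M(X;G)$, where the terms are built from addition, the bracket, the finitely many scalar operations occurring in $\Phi$, and $0$. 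As $\varphi_{\lambda}$ is an $R$-algebra homomorphism, it commutes with each of these term operations, so every one of the equalities is preserved under $\varphi_{\lambda}$ for all $\lambda$. For each of the inequalities the element $t_{r}(\bar a)-t'_{r}(\bar a)$ is nonzero in $M(X;G)$, so Lemma~\ref{tononzero} provides $\lambda_{r}$ with $\varphi_{\lambda}\bigl(t_{r}(\bar a)-t'_{r}(\bar a)\bigr)\neq 0$ for every $\lambda\geqslant\lambda_{r}$; taking $\lambda$ larger than all of the finitely many $\lambda_{r}$ makes all of these inequalities survive as well. Then $\varphi_{\lambda}(a_{1}),\dots,\varphi_{\lambda}(a_{m})$ witness $\sigma$ in $M(X';G')$. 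Combining the two inclusions gives existential, hence universal, equivalence.

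Given Lemma~\ref{tononzero} and the lemmas leading to it, the argument above is light, so I do not expect a genuine obstacle in it. The one point to be careful about — and the reason I would argue with existential sentences rather than try to match an arbitrary finite submodel of $M(X;G)$ to a finite submodel of $M(X';G')$ directly — is that a single $\varphi_{\lambda}$ need not preserve the induced relational structure of such a submodel: it may create a proportionality $\varphi_{\lambda}(b)=r\,\varphi_{\lambda}(a)$ that fails between $a$ and $b$. An existential sentence, however, mentions only finitely many scalars, which is exactly what lets the step using Lemma~\ref{tononzero} go through.
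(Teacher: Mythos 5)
Your argument is correct, and it reaches the conclusion by a slightly different route than the paper. Both proofs rest on the same two pillars: the subalgebra embedding $M(X';G')\subseteq M(X;G)$ for the easy inclusion, and the family of homomorphisms $\varphi_{\lambda}$ of (\ref{homomorph}) together with Lemma~\ref{tononzero} for the hard one. The difference is in the packaging of the hard inclusion. The paper invokes the finite-submodel criterion of Theorem~\ref{univeq}: given a finite set $\Gamma=\{g_1,\dots,g_m\}$, it forms the auxiliary set $\overline{\Gamma}$ of elements $g_i-g_j$, $g_i+g_j-g_k$, $[g_i,g_j]-g_k$, applies Lemma~\ref{tononzero} to each nonzero member of $\overline{\Gamma}$, and takes $\lambda$ beyond the maximum of the resulting bounds, so that $\varphi_{\lambda}$ carries the induced finite submodel isomorphically into $M(X';G')$. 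You instead verify preservation of existential sentences directly: put the matrix in disjunctive normal form, note that the equations are preserved by any $\varphi_{\lambda}$ since it is a homomorphism, and use Lemma~\ref{tononzero} on the finitely many nonzero differences $t_r(\bar a)-t'_r(\bar a)$ to choose one $\lambda$ making all inequations survive. The two arguments are essentially equivalent in content, but yours buys two small things: you actually prove injectivity of the embedding $\iota$ via the basis description of Section~\ref{sec2} (the paper only asserts that $M(X';G')$ is a subalgebra), and, as you note, working sentence by sentence means only the finitely many scalars occurring in the formula need to be respected, which sidesteps the issue that a fixed $\varphi_{\lambda}$ does create new proportionalities (e.g.\ $\varphi_{\lambda}([u_{i,1}])=\lambda\varphi_{\lambda}([u_{i,0}])$) and hence need not reflect every scalar relation on an arbitrary finite set; the paper's closure set $\overline{\Gamma}$ handles the ring operations and the bracket but not these scalar relations explicitly. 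What the paper's formulation buys in exchange is alignment with its stated Theorem~\ref{univeq} and no need to discuss normal forms of formulas.
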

   \begin{proof}
     By Theorem~%
\ref{univeq}, it suffices to show that for each finite submodel in
     $M(X;G)$ there exists an isomorphic submodel in
     $M(X';G')$ and vice versa.

     The converse is obviously true because
     $M(X';G')$ is a subalgebra of
     $M(X;G)$. Let us prove the direct statement.

     Let
     $x=x_{n-1 }$ and
     $x_{n-2}\sim_{\perp}x_{n-1}$. Let also
     $\Gamma=\{g_1,\dots,g_m\}$ be a finite set of the elements of
     $M(X;G)$. Extend
     $\Gamma$ adding the elements
     $g_i-g_j$,
     $g_i+g_j-g_k$,
     $[g_i,g_j]-g_k$ for all
     $i,j,k=1,2\dots ,m$ and denote by
     $\overline{\Gamma}$ the obtained set. It is sufficient to show that there
     exists
     $\lambda$ such that the kernel of
     $\varphi_{\lambda}: M(X;G)\rightarrow M(X';G')$ is disjoint with
     $\overline{\Gamma}$. If it is the case then the images of the elements in
     $\Gamma$ are distinct. Moreover, if
     $g_i\neq g_j+g_k$ or
     $g_i\neq [g_j,g_k]$ then the images of
     $g_i$ and
     $g_j+g_k$ (images of
     $g_i$ and
     $[g_j,g_k]$ respectively) are not equal either.

     By Lemma~%
\ref{tononzero}, for any non-zero
     $g\in \overline{\Gamma}$ there exists
     $\lambda_0(g)$ such that for any
     $\lambda\geqslant \lambda_0 (g)$ the following inequality
     holds:
     $\varphi_{\lambda}(g)\neq 0$.  Let
     $\lambda_0$ be maximal among
     $\lambda_0(g)$ for all
     $g\in \overline{\Gamma}$.  Then for any
     $\lambda\geqslant\lambda_0$ and for any
     $g\in \overline{\Gamma}$ we obtain
     $\varphi_{\lambda}(g)\neq 0$.

     So, the universal theories of
     $M(X;G)$ and
     $M(X';G')$ coinside.
  \end{proof}

  Let
  $G=\langle X;E\rangle$ be a graph. Suppose that there exists a
  $\sim_{\perp}$-equivalence class containing at least two
  vertices. Let
  $x$ be a vertex of such class,
  $X'=X\backslash \{x\}$, and
  $G'=G(X')$. Then by Theorem~%
\ref{equivequiv}, the universal theories of
  $M(X;G)$ and
  $M(X';G')$ coincide.
  Moreover, it is easy to see that for any equivalence relation if
  we remove an element from any equivalence class then all other elements of this class still remain in the same
  equivalence class and other equivalence classes do not change.

  So, if an obtained graph still contains a
  $\sim_{\perp}$ equivalence class with at least two vertices then we can repeat the procedure described
  above. By Theorem,~%
\ref{equivequiv} we again get a partially commutative metabelian
  Lie algebra that is universally equivalent to the initial one and so on.

  Let
  $G$ be any graph. We can remove all but one vertices from  each
  $\sim_{\perp}$-equivalence class of this graph. The universal theories of the initial and
  final graphs coincide. The obtained graph is called the
  \emph{compaction} of
  $G$. Let us denote it by
  $\overline{G}$.

  On Fig.~%
\ref{2graphs} there is an example of two graphs with the same
  universal theories such that one of them is a tree while the other one is not.
  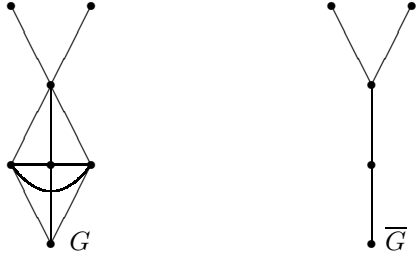
\begin{figure}
    \begin{picture}(150,100)
      \put(15,10){\line(0,1){60}}
      \put(30,40){\line(-1,2){30}}
      \put(0,40){\line(1,2){30}}
      \put(0,40){\line(1,0){30}}
      \put(15,10){\line(-1,2){15}}
      \put(15,10){\line(1,2){15}}
      \multiput(15,10)(0,30){3}{\circle*{3}}
      \multiput(0,100)(30,0){2}{\circle*{3}}
      \multiput(0,40)(30,0){2}{\circle*{3}}
      \put(135,10){\line(0,1){60}}
      \put(135,70){\line(-1,2){15}}
      \put(135,70){\line(1,2){15}}
      \multiput(135,10)(0,30){3}{\circle*{3}}
      \multiput(120,100)(30,0){2}{\circle*{3}}
      \put(22,8){$G$}
      \put(140,8){$\overline{G}$}
      \qbezier(0,40)(15,20)(30,40)
    \end{picture}
    \caption{Graph
      $G$ and it compaction
      $\overline{G}$}\label{2graphs}
  \end{figure}

  Finally, let us note that the converse of Theorem~%
\ref{equivequiv} is not true. Namely, even if the algebras
  $M(X;G)$ and
  $M(Y;H)$ are universally equivalent it does not mean that we
  can obtain
  $H$ from
  $G$ adding and removing the vertices to
  $\sim_{\perp}$-equivalence classes. Indeed, it is easy to see
  that all compactions of a graph are isomorphic. On the other hand,
  a compaction of any tree is this tree itself. But in~
\cite{PT13}, it was shown that if defining graphs of two partially
  commutative metabelian Lie algebras are trees then these algebras can be universally
  equivalent even if their defining graphs are not isomorphic.


\begin{thebibliography}{9}
     \bibitem{CF69} Cartier F., Foata D., Problems combinatorics de computation et rearrangements,
       Lecture Notes in Mathematics, 85, Springer-Verlag, Berlin, 1969.
     \bibitem{DK92}
       Duchamp G., Krob D., The lower central ceries of the free partially commutative
       group, Semigroup Forum, {\bfseries 45} (1992), 385-–394.
     \bibitem{DK92'}
       Duchamp G., Krob D., The Free Partially Commutative Lie Algebra: Bases and Ranks,
       Advances in Mathematics, {\bfseries 92}, 1992, 95--126.
     \bibitem{DK93}
       Duchamp G., Krob D., Free Partially Commutative Structures,
       Journal of Algebra, {\bfseries 156} (1993), 318--361.
     \bibitem{DKR07}
       Duncan A.\,J., Kazachkov I.\,V., Remeslennikov V.\,N.
       Parabolic and quasiparabolic subgroups of free partially commutative
       groups, Journal of Algebra, {\bfseries 318}, 2 (2007), 918--932.
     \bibitem{GT09}
       Gupta Ch.\,K., Timoshenko E.\,I., Partially Commutative Metabelian Groups: Centralizers and Elementary Equivalence,
       Algebra and Logic, {\bfseries 48}, 3 (2009), 173--192.
     \bibitem{GT11} Gupta Ch.\,K., Timoshenko E.\,I., Universal Theories for Partially Commutative Metabelian Groups,
       Algebra and Logic, {\bfseries 50}, 1 (2011), 1-16.
     \bibitem{KMNR80}
       Kim, K.\,H., Makar-Limanov, L., Neggers, J., Roush, F.\,W.,
       Graph algebras, J. Algebra, {\bfseries 64} (1980), 46--51.
     \bibitem{Por11}
       Poroshenko E.\,N., Bases for partially commutative Lie algebras,
       Algebra and Logic, {\bfseries 50}, 5 (2011),  405--417.
     \bibitem{Por12}
       Poroshenko E.\,N., Centralizers in partially commutative Lie algebras,
       Algebra and Logic, {\bfseries 51}, 4 (2012), 351--371.
     \bibitem{PT13}
       Poroshenko E.\,N., Timoshenko E.\,I., Universal Equivalence of
       Partially Commutative Metabelian Lie Algebras, J. Algebra, 2013, 143--168.
     \bibitem{Se89}
       Servatius H., Automorphisms of graph groups, J. Algebra, {\bfseries 126},
       1 (1989), 34-–60.
     \bibitem{She05}
       Shestakov S.\,L., The equation
       $[x, y] = g$ in partially commutative groups,
       Siberian Math. J., {\bfseries 46}, 2 (2005), 364–-372.
     \bibitem{She06}
       Shestakov S.\,L., The equation
       $x^2 y^2 = g$ in partially commutative groups,
       Siberian Math. J., {\bfseries 47}, 2 (2006), 383–-390.
     \bibitem{Ti10}
       Timoshenko E.\,I., Universal Equivalence of Partially Commutative Metabelian Groups,
       Algebra and Logic, {\bfseries 49}, 2 (2010), 177--196.
     \bibitem{Ti11} Timoshenko E.\,I., A Mal'tsev Basis for a Partially Commutative Nilpotent Metabelian
       Group, Algebra and Logic, {\bfseries 50}, 5 (2011), 439--446.
  \end{thebibliography}
\end{document}